\documentclass[journal]{IEEEtran}

\usepackage{graphicx} 
\usepackage{amsmath, amssymb, amsfonts}
\usepackage{color}
\usepackage{subfig}
\usepackage{epstopdf}
\usepackage{setspace}

\newcommand{\eome}{{\rm \nabla\kern-.6em \nabla}}

\usepackage[bb=boondox]{mathalfa}

\newtheorem{theo}{Theorem}
\newtheorem{proposition}{Proposition}

\newtheorem{lemma}{Lemma}
\newtheorem{remark}{Remark}
\newenvironment{proof}[1][]{{\it Proof #1:~}}{\hfill$\square$\\}
\newcommand{\Chi}{\mathfrak{X}}

\newcommand{\supeq}{\geqslant}
\newcommand{\infeq}{\leqslant}
\newcommand{\V}{\mathcal{V}}

\newcommand{\R}{\mathbb{R}}
\renewcommand{\S}{\mathbb{S}}






\renewcommand\epsilon{\varepsilon}

\begin{document}

\title{Lyapunov stability analysis of a string equation coupled with an ordinary differential system}

\author{Matthieu~Barreau, Alexandre~Seuret, Fr\'ed\'eric~Gouaisbaut and Lucie~Baudouin
\thanks{M. Barreau, A. Seuret, F. Gouaisbaut and L. Baudouin are with LAAS - CNRS, Universit\'e de Toulouse, CNRS, UPS, France. This work is supported by the ANR project SCIDiS contract number 15-CE23-0014.\newline E-mail address: \textit{mbarreau,aseuret,fgouaisb,lbaudoui@laas.fr}.  }
}

\maketitle

\begin{abstract}
This paper considers the stability problem of a linear time invariant system in feedback with a string equation. A new Lyapunov functional is proposed using augmented states which enriches and encompasses the classical functionals of the literature. It results in tractable stability conditions expressed in terms of linear matrix inequalities. This methodology follows from the application of the Bessel inequality to the projections over the Legendre polynomials. Numerical examples illustrate the potential of our approach through three scenari: a stable ODE perturbed by the PDE, an unstable open-loop ODE and an unstable closed-loop ODE stabilized by the PDE.
\end{abstract}

\begin{IEEEkeywords}
String equation, Ordinary differential equation, Lyapunov functionals, LMI.
\end{IEEEkeywords}

\IEEEpeerreviewmaketitle
\vspace{-0.4cm}
\section{Introduction}

This paper presents a novel approach to assess stability of a heterogeneous system composed of the interconnection of a partial differential equation (PDE), more precisely a damped string equation, with a linear ordinary differential equation (ODE). While the topic of stability and control of PDE systems has a rich literature between applied mathematics \cite{coron2007control,lions1988exact} and automatic control \cite{luo2012stability}; the stability analysis (and the control) of such a coupled system belongs to a recent research area. To cite a few related results, one can refer to \cite{castillobuenaventura:hal-00718725, castillo2016dynamic, safi:hal-01354073} where an ODE is interconnected with a transport equation, to \cite{tang2011state} for a heat equation, \cite{krstic2009delay,cerpa:hal-01670643} for the wave equation and \cite{Wu20142787} for the beam equation.

Generally, the PDE is viewed as a perturbation to be compensated for instance using a backstepping method as proposed in \cite{krstic2011}, where infinite dimensional controllers are provided to cope with the undesirable effect of the PDE. Another interesting point of view relies on the converse approach: the ODE system can be seen as a finite dimensional boundary controller for the PDE (see \cite{andrea1994,morgul1995stabilization,Morgül2002731}). A last strategy describes a robust control approach, aiming at characterizing the robustness of the PDE-ODE interconnection \cite{espitia2016event}.



In the present paper, we consider a damped string equation, i.e. a stable one-dimensional wave equation which is connected at its boundary to a stable or unstable ODE. The proposed method to assess stability is inspired by the recent developments on the stability analysis of time-delay systems based on Bessel inequality and Legendre polynomials \cite{seuret:hal-01065142}. Since time-delay systems represent a particular class of systems coupling a transport PDE with a classical ODE  system (see for instance \cite{baudouin:hal-01310306}), the main motivation of this work is to show how this methodology can be adapted to a larger class of PDE/ODE systems as demonstrated with the heat equation in \cite{baudouinHeat}. 

Compared to the literature on coupled PDE/ODE systems, the proposed methodology aims at designing a new Lyapunov functional, integrating some cross-terms merging the ODE's and the PDE's usual terms. This new class of Lyapunov functional encompasses the classical notion of energy usually proposed in the literature by offering more flexibility. Hence, it allows to guarantee stability for a larger set of systems, for instance, unstable open-loop ODE and, for the first time to the best of our knowledge, even an unstable closed-loop ODE; meaning that the PDE helps for the stabilization. 

The paper is organized as follows. The next section formulates the problem and provides some general results on the existence of solutions and equilibrium. In Section~3, after a modeling phase inspired by the Riemann coordinates, a generic form of Lyapunov functional is introduced, and its associate analysis leads to a first stability theorem. Then, in Section~4, an extension using Bessel inequality is provided. Finally, Section~5 discusses the results on three examples. The last section draws some conclusion and perspectives.

\textbf{Notations:} $\Omega$ is the closed set $[0, 1]$ and $\mathbb{R}^+ = [0, +\infty)$. $(x,t) \mapsto u(x,t)$ is a multi-variable function from $\Omega \times \mathbb{R}^+$ to $\mathbb{R}$.
The notation $u_t$ stands for $\frac{\partial u}{\partial t}$. We also use the notations $L^2 = L^2(\Omega; \mathbb{R})$ and for the Sobolev spaces: ${H}^n = \{ z \in L^2; \forall m \infeq n, \frac{\partial^m z}{\partial x^m} \in L^2 \}$ and particularly $H^0 = L^2$. 
The norm in $L^2$ is $\|z\|^2 =  \int_{\Omega} |z(x)|^2  dx$. 
For any square matrices $A, B$, the operations `$\text{He}$' and `$\text{diag}$' are defined as follows: $\text{He}(A) = A + A^{\top}$ and $\text{diag}(A,B) = \left[ \begin{smallmatrix}A & 0\\ 0 & B \end{smallmatrix} \right]$.
A symmetric positive definite matrix $P$ of $\R^{n \times n}$ belongs to the set $\S^n_+$ or we write more simply $P \succ 0$.

\vspace{-0.4cm}
\section{Problem Statement}


We consider the coupled system described by
\begin{subequations}
	\begin{align}
		\dot{X}(t) &= AX(t) + Bu(1,t), & t \supeq 0, \label{eq:controller} \\
		u_{tt} (x,t) &= c^2 u_{xx}(x,t), & x \in \Omega, t \supeq 0, \label{eq:wave} \\
		u(0,t) &= K X(t), & t \supeq 0, \label{eq:boundary1} \\
		u_x(1,t) &= -c_0 u_t(1,t), & t \supeq 0, \label{eq:boundary2} \\
		u(x, 0) &=u^0(x), \ u_t(x, 0) = v^0(x), & x \in \Omega, \label{eq:initial1} \\
		X(0) & = X^0, & \label{eq:initial3}
	\end{align}
	\label{eq:problem}
\end{subequations}
\!\!with the initial conditions $X^0 \in \mathbb{R}^n$ and $(u^0, v^0) \in {H}^2 \times H^1$ such that equations \eqref{eq:boundary1} and \eqref{eq:boundary2} are respected. They are then called ``compatible'' with the boundary conditions. $A, B$ and $K$ are time-invariant matrices of appropriate size.

\begin{remark} When no confusion is possible, parameter $t$ may be omitted and so do the domains of definition. \end{remark}

This system can be viewed as an interconnection in feedback between a linear time invariant system \eqref{eq:controller} and an infinite dimensional system modeled by a string equation \eqref{eq:wave}. The latter is a one dimension hyperbolic PDE, representing the evolution of a wave of speed $c > 0$ and amplitude $u$. To keep the content clear, $u(x, t)$ is assumed to be a scalar but the calculations are done as if it was a vector of any dimension. The measurement we have access to is the state $u$ at $x = 1$ which is the right extremity of the string and the control is a Dirichlet actuation (equation \eqref{eq:boundary1}) because it affects directly the state $u$ and not its derivative. Another boundary condition must be added. It is defined at $x = 1$ by $u_x(1) = - c_0 u_t(1)$. This is a well-known damping condition when $c_0 > 0$ (see for example \cite{Lagnese1983163}). 
As presented in \cite{bresch2014output}, we find this kind of systems for instance when modeling a drilling mechanism. The control is then given at one end and the measurement is done at the other end.

More generally, this system can be seen either as the control of the PDE by a finite dimensional dynamic control law generated by an ODE \cite{chen1990wave} or on the contrary the robustness of a linear closed loop system with a control signal conveyed by a damped string equation. On the first scenario, both the ODE and the PDE are stable and the stability of the coupled system is studied. The second case corresponds to an unstable but stabilizable ODE connected to a stable PDE. 
To sum up, this paper focuses on the stability analysis of \textit{closed-loop} coupled system \eqref{eq:problem} with a potentially unstable closed-loop ODE but a stable PDE. This differs significantly from the backstepping methodology of \cite{krstic2009delay} which aims at designing an infinite dimensional control law ensuring the stability of a cascaded ODE-PDE system with a closed-loop stable ODE.


\vspace{-0.5cm}
\subsection{Existence and regularity of solutions} 

This subsection is dedicated to the existence and regularity of solutions $(X, u, u_t)$ to system \eqref{eq:problem}. We first introduce $\mathcal{H}^m = \mathbb{R}^n \times {H}^m \times H^{m-1}$ for $m \supeq 1$. We consider the classical norm on the Hilbert space $\mathcal{H} = \mathcal{H}^1 = \mathbb{R}^n \times {H}^1 \times L^2$:
\begin{equation*}
	\| (X, u, v) \|^2_{\mathcal H} = |X|_n^2 + \|u\|^2 + c^2\| u_x \|^2 + \| v \|^2.
\end{equation*}

This norm can be seen as the sum of the energy of the ODE system and the one of the PDE. 

\begin{remark}
	A more natural norm for space $\mathcal H$ would be $|X|_n^2 + \|u\|^2 + \| u_x \|^2 + \|v\|^2 $ which is equivalent to $\| \cdot \|^2_{\mathcal H}$. The norm used here makes the calculations easier in the sequel.
\end{remark}

Once the space is defined, we model system \eqref{eq:problem} using the following linear unbounded operator $T : \mathcal{D}(T) \to \mathcal H$:
\[
	T \left( \begin{smallmatrix} X \\ u \\ v \end{smallmatrix} \right) = \left( \begin{smallmatrix} A X + B u(1) \\ v \\ c^2 u_{xx} \end{smallmatrix} \right) \text{ and }
\]
\vspace{-0.2cm}
\begin{equation*}
	\mathcal{D}(T) = \left\{ (X,u,v) \in \mathcal H^2,  u(0) = K X, u_x(1) = -c_0 v(1) \right\}.
\end{equation*}

This operator $T$ is said to be dissipative with respect to a norm if its time-derivative along the trajectories generated by $T$ is strictly negative. The goal of this paper is then to find an equivalent norm to $\| \cdot \|_{\mathcal H}$ which allows us to refine the dissipativity analysis of $T$. This equivalent norm is derived from a general formulation of a Lyapunov functional, whose parameters  are chosen using a semi-definite programming optimization process.\\
Beforehand, from the semi-group theory, we propose the following result on the existence of solutions for \eqref{eq:problem}.

\begin{proposition} \label{sec:existence}
	If there exists a norm on $\mathcal{H}$ for which the linear operator $T$ is dissipative with $A+BK$ non singular, then there exists a unique solution $(X,u,u_t)$ of system \eqref{eq:problem} with initial conditions $(X^0, u^0, v^0) \in \mathcal{D}(T)$. Moreover, the solution has the following regularity property: $(X,u,u_t) \in C(0, +\infty, \mathcal H)$.
\end{proposition}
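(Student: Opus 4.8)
The plan is to invoke the Lumer–Phillips theorem for $C_0$-semigroups of contractions. The key point is that dissipativity of $T$ with respect to \emph{some} norm equivalent to $\|\cdot\|_{\mathcal H}$ means $T$ is dissipative in the usual semigroup sense on the Hilbert space $\mathcal H$ re-equipped with the equivalent inner product; since the two norms are equivalent, $\mathcal H$ with the new inner product is still a Hilbert space and $\mathcal D(T)$ is still dense in it (density is topological and the topology is unchanged). So it remains only to check the range condition: $\lambda I - T$ is surjective onto $\mathcal H$ for some (equivalently all) $\lambda > 0$. Once Lumer–Phillips applies, $T$ generates a $C_0$-semigroup of contractions, and standard semigroup theory gives, for every initial datum in $\mathcal D(T)$, a unique classical solution with the stated regularity $(X,u,u_t)\in C(0,+\infty;\mathcal H)$ (indeed $C^1$ into $\mathcal H$ and $C^0$ into $\mathcal D(T)$).

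Concretely, I would proceed as follows. First, record that the equivalent norm comes from an inner product $\langle\cdot,\cdot\rangle_\star$ on $\mathcal H$; since $T$ is dissipative for $\|\cdot\|_\star$, $\mathrm{Re}\,\langle T z, z\rangle_\star \le 0$ for all $z\in\mathcal D(T)$. Second, verify $\overline{\mathcal D(T)}=\mathcal H$: smooth functions respecting the two boundary constraints $u(0)=KX$, $u_x(1)=-c_0 v(1)$ are dense in $\mathcal H^1=\mathbb R^n\times H^1\times L^2$, and equivalence of norms does not affect this. Third, solve the resolvent equation $(\lambda I - T)(X,u,v)=(F,f,g)$ for given data: from the first row $\lambda X - AX - Bu(1)=F$, from the second $\lambda u - v = f$ so $v=\lambda u - f$, and substituting into the third row $\lambda v - c^2 u_{xx}=g$ yields a one-dimensional boundary value problem $c^2 u_{xx} - \lambda^2 u = -\lambda f - g$ with boundary conditions $u(0)=KX$ and $u_x(1)=-c_0 v(1) = -c_0(\lambda u(1) - f(1))$, where $X$ is itself determined by $u(1)$ through $X=(\lambda I - A)^{-1}(F + Bu(1))$ — here one uses $A+BK$ nonsingular to guarantee, for suitable $\lambda$, that the resulting closed system for $u$ is well posed, in particular that $\lambda I - A$ is invertible and the scalar condition fixing $u(1)$ is non-degenerate. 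The ODE $c^2 u'' - \lambda^2 u = h$ with one Dirichlet and one Robin condition is classically solvable in $H^2$, and then $v=\lambda u - f\in H^1$ and $X\in\mathbb R^n$ follow, giving a preimage in $\mathcal D(T)$; hence $\lambda I - T$ is onto.

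The main obstacle is the range condition, specifically keeping track of how the coupling through $u(1)$ and $X$ interacts when solving the resolvent BVP: one must choose $\lambda>0$ large enough (or argue non-singularity directly) so that the linear map $u(1)\mapsto$ (the compatibility/boundary data) is invertible, and this is exactly where the hypothesis that $A+BK$ is non-singular enters — it prevents a degenerate configuration of the boundary operator. Everything else (density, dissipativity transported through the equivalent inner product, and the passage from Lumer–Phillips to existence, uniqueness and continuity of solutions) is standard and I would state it by citing the semigroup literature rather than reproving it. Note also that dissipativity in the \emph{strict} sense assumed here is more than Lumer–Phillips needs, so one could even get exponential decay, but for the present statement mere contractivity suffices.
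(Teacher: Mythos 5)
Your proposal is correct and follows essentially the same route as the paper: Lumer--Phillips applied to the dissipative operator $T$, with the range condition verified by explicitly solving the resolvent system, which reduces to a two-point boundary-value problem for $u$ coupled to a finite-dimensional equation of the form $\bigl(\lambda I_n - A - BK\,f(\lambda c^{-1})\bigr)X = \text{data}$. The only divergence is the regime of $\lambda$: the paper lets $\lambda \to 0$, where $f(\lambda c^{-1}) \to 1$ and the nonsingularity of $A+BK$ is precisely what gives invertibility, whereas in your large-$\lambda$ regime invertibility follows from the boundedness of $f$ alone, so the hypothesis on $A+BK$ is not really what is doing the work at that step (it is, however, what the paper needs, and it is harmless that you do not use it).
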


\begin{proof}This proof follows the same lines than in \cite{morgul1994dynamic}. Applying Lumer-Phillips theorem (p103 from \cite{tucsnak2009observation}), as the norm is dissipative, it is enough to show that for all $\lambda \in (0, \lambda_{max})$ with $\lambda_{max} > 0$, $\mathcal{D}(T) \subset \mathcal{R} \left( \lambda I - T \right)$ where $\mathcal{R}$ is the range operator. 
Let $(r, g, h) \in \mathcal{H}$, we want to show that for this system, there exists $(X, u, v) \in \mathcal{D}(T)$ for which the following set of equation is verified:
\begin{subequations} \label{eq:existence}
	\begin{align}
		\lambda X - AX - Bu(1) & = r, \label{eq:1}\\
		\lambda u(x) - v(x) & = g(x),  \label{eq:2}\\
		\lambda v(x) - c^2 u_{xx}(x) & = h(x),  \label{eq:3}
	\end{align}
\end{subequations}
for all $x \in \Omega$ and a given $\lambda > 0$. Equations \eqref{eq:2}, \eqref{eq:3} give:
\[
	\forall x \in (0, 1), \ \ u(x) = k_1 \exp(\lambda c^{-1} x) + k_2 \exp(-\lambda c^{-1} x) + G(x)
\]
where $G(x) = \int_0^x \frac{\lambda g(s) + h(s)}{\lambda c} \text{sinh}\left( \frac{\lambda}{c} (s - x) \right) ds \in H^2$. $k_1, k_2 \in \mathbb{R}$ are constants to be determined. Using the boundary condition $u(0) = KX$, we get:
\begin{equation*} \label{eq:u}
	\forall x \in (0, 1), \ \ u(x) = 2 k_1 \text{sinh}(\lambda c^{-1}x) + K X e^{-\lambda c^{-1}x} + G(x),
\end{equation*}
 Taking its derivative at the boundary we get:
\[
	u_x(1) = 2 \lambda c^{-1} k_1 \text{cosh}(\lambda c^{-1}) - \lambda c^{-1} KXe^{-\lambda c^{-1}} + G_1,
\]
with $G_1 \in \mathbb{R}$ known. We also have $u_x(1) + c_0v(1) = 0$, leading to $u(1) = G_2 + KX f(\lambda c^{-1})$ with $G_2 \in \mathbb{R}$ and $f(y) = \left(1 -  \tfrac{(c c_0 - 1)\text{sinh}(y) }{2(\text{cosh} (y) + c c_0 \text{sinh}(y))}\right) e^{-y}$. Then using  \eqref{eq:1}, we get: 
\[
	\left( \lambda I_n - (A+BK f(\lambda c^{-1})) \right)  X  = r +  BG_2.
\]

Since $f(\lambda c^{-1}) \to 1$ when $\lambda \to 0$ 
 and $A+BK$ is non singular, there exists $\lambda_{max} > 0$ such that $A + BKf(\lambda_{max} c^{-1})$ is non singular and 
\[
	\forall \lambda \in (0, \lambda_{max}), \quad \text{det} \left( \lambda I_n - (A+BK f(\lambda c^{-1})) \right) \neq 0.
\]

Then, there is a unique $X \in \mathbb{R}^n$ for a given $(r, f, h) \in \mathcal{H}$. We immediately get that $(X, u, v)$ is in $\mathcal{D}(T)$. Then for $\lambda \in (0, \lambda_{max}) $, $\mathcal{D}(T) \subset \mathcal{R}(\lambda I - T)$. The regularity property falls from Lumer-Phillips theorem.
\end{proof}
\vspace{-0.5cm}


\vspace{-0.35cm}
\subsection{Equilibrium point}
An equilibrium $x_{eq} = (X_e, u_e, v_e) \in \mathcal{D}(T)$ of system \eqref{eq:problem} is such that $Tx_{eq} =  (0_{n,1}, 0, 0) = 0_{\mathcal{H}}$, i.e. it verifies the following linear equations:
	\begin{subequations} \label{eq:equilibrium}
		\begin{align}
			0 & = AX_e+B u_e(1), \label{eq:equilibrium1}\\
			0 & = c^2 \partial_{xx} u_e(x), \quad \quad \quad x \in (0, 1), \label{eq:equilibrium2}\\
			v_e(x) & = 0, \quad \quad \quad \quad \quad \quad \quad \hspace{0.09cm} x \in (0,1),\\
			u_e(0) & = K X_e, \label{eq:equilibrium3}\\
			\partial_x u_e(1) & = 0. \label{eq:equilibrium4}
		\end{align}
	\end{subequations}
Using equation \eqref{eq:equilibrium2}, we get $u_e$ as a first order polynomial in $x$ but in accordance to equation \eqref{eq:equilibrium4}, $u_e$ is a constant function. Then, using equation \eqref{eq:equilibrium3}, we get $u_e = K X_e$. That leads to: $\left( A+BK \right)X_e = 0$. We obtain the following proposition:
	\begin{proposition} \label{sec:propEquilibrium}
		An equilibrium $(X_e, u_e, v_e) \in \mathcal{H}$ of system \eqref{eq:problem} verifies \mbox{$(A+BK) X_e = 0$}, $u_e=KX_e$, $v_e = 0$. Moreover, if $A+BK$ is not singular, system \eqref{eq:problem} admits a unique equilibrium $(X_e, u_e, v_e) = (0_{n,1}, 0, 0) = 0_{\mathcal{H}}$.
	\end{proposition}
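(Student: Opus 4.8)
The plan is to read off the equilibrium directly from the condition $T x_{eq} = 0_{\mathcal H}$, which is precisely the linear system \eqref{eq:equilibrium}, and then solve it by elementary integration of the spatial ODE. First I would integrate \eqref{eq:equilibrium2}: since $\partial_{xx} u_e = 0$ on $(0,1)$, the function $u_e$ is affine, $u_e(x) = \alpha x + \beta$ for some $\alpha, \beta \in \mathbb{R}$. Next I would invoke the Neumann-type boundary condition \eqref{eq:equilibrium4}, $\partial_x u_e(1) = 0$, which forces $\alpha = 0$, so $u_e$ is the constant function equal to $\beta$. The Dirichlet condition \eqref{eq:equilibrium3} then identifies $\beta = KX_e$, hence $u_e \equiv KX_e$ and in particular $u_e(1) = KX_e$.

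Substituting $u_e(1) = KX_e$ into the algebraic relation \eqref{eq:equilibrium1} yields $0 = AX_e + BKX_e = (A+BK)X_e$, while $v_e = 0$ is immediate from the third line of \eqref{eq:equilibrium}. This proves the first assertion. For the second, if $A+BK$ is nonsingular then $(A+BK)X_e = 0$ admits only $X_e = 0$; back-substitution gives $u_e = KX_e = 0$ and $v_e = 0$, so $0_{\mathcal H}$ is the only candidate equilibrium. Since $0_{\mathcal H}$ trivially satisfies every equation of \eqref{eq:equilibrium} and belongs to $\mathcal{D}(T)$, it is indeed an equilibrium, and uniqueness follows.

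The argument is essentially routine and I do not anticipate a real obstacle; the only point deserving a word of care is that all these manipulations take place at the level of $\mathcal{D}(T) \subset \mathcal H^2$, so $u_e \in H^2$ and the pointwise traces $u_e(0)$ and $\partial_x u_e(1)$ are meaningful by Sobolev embedding — a regularity bookkeeping remark rather than a genuine difficulty.
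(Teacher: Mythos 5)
Your proposal is correct and follows exactly the paper's own argument: integrate \eqref{eq:equilibrium2} to get an affine $u_e$, use \eqref{eq:equilibrium4} to make it constant, then \eqref{eq:equilibrium3} and \eqref{eq:equilibrium1} to obtain $(A+BK)X_e=0$, with uniqueness when $A+BK$ is nonsingular. The regularity remark is a fine (if unneeded) addition; nothing is missing.
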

\section{A First Stability Analysis Based on Modified Riemann Coordinates}
This part is dedicated to the construction of a Lyapunov functional. We introduce therefore a new structure based on variables directly related to the states of system \eqref{eq:problem}. 

\subsection{Modified Riemann coordinates}

The PDE considered in system \eqref{eq:problem} is of second order in time. As we want to use some tools already designed for first order systems, we propose to define some new states using modified Riemann coordinates, which satisfy a set of coupled first order PDEs and diagonalize the operator. Let us introduce these coordinates, defined as follows:
\vspace{-0.2cm}
\begin{equation*} \label{eq:sys2_3}
	\chi(x) =  \left[ \begin{matrix} u_t(x) + c u_x(x) \\ u_t(1-x) - c u_x(1-x) \end{matrix} \right] = \left[ \begin{matrix} \chi^{+}(x) \\ \chi^{-}(1-x) \end{matrix} \right].
\end{equation*}

The introduction of such variables is not new and the reader can refer to articles  \cite{prieur2016wave,bresch2014output} or \cite{4060979} and references therein about Riemann invariants. $\chi^+$ and $\chi^-$ are eigenfunctions of equation \eqref{eq:wave} associated respectively to the eigenvalues $c$ and $-c$. Therefore, using $\chi^{-}(1-x)$,  the previous equation leads to a transport PDE:

\begin{equation} \label{eq:sys2_2} 
	\forall t \geq 0, \forall x \in \Omega, \quad \chi_t(x,t) = c \chi_x(x,t).
\end{equation}

\begin{remark} The norm of the modified state $\chi$ can be directly related to the norm of the functions $u_t$ and $u_x$. Indeed simple calculations and a change of variables give:
\begin{equation}\label{eq:norm_chi_u}
	\|\chi\|^2 = 2 \left( \|u_t\|^2 + c^2 \|u_x\|^2 \right).
\end{equation}
\end{remark}
\begin{remark}This manipulation does not aim at providing an equivalent formulation for system \eqref{eq:problem} but at identifying a manner to build a Lyapunov functional for system \eqref{eq:problem}. \end{remark}



The second step is to understand how the extra-variable $\chi$ interacts with ODE \eqref{eq:controller}. Hence using \eqref{eq:boundary1}, we notice:
\begin{equation*}\begin{array}{lcl}
	\dot{X}& = &AX + B\left( u(1) - u(0) + KX \right),\\
	&= &(A+BK)X + B \int_0^1 u_x(x) dx.
\end{array}
\end{equation*}
To express the last integral term using $\chi$, we note that:
\begin{equation*}
	 2c\int_0^1 u_x(x) dx=\int_0^1 \chi^+(x)dx-\int_0^1\chi^-(x) dx.
\end{equation*}

This expression allows us to rewrite the ODE system as $\dot{X} = (A+BK)X + \tilde{B} \Chi_0$ where $\Chi_0 := \int_0^1 \chi(x) dx$ and $\tilde{B} = \frac{1}{2c} B \left[ \begin{smallmatrix} 1 & -1 \end{smallmatrix} \right]$.
The extra-state $\Chi_0$ follows the dynamics:
\begin{equation*}
	\dot{\Chi}_0  =  c \int_0^1 \chi_x(x) dx = c \left[ \chi(1) - \chi(0) \right].
\end{equation*}

The ODE dynamic can then be enriched by considering an extended system where $\Chi_0$ is viewed as a new dynamical state:
\begin{equation} \label{eq:extendedSystem}
	\begin{array}{rl}
		\dot{X}_0 = & \left[ \begin{smallmatrix} A+BK & \tilde{B} \\ 0_{2, n} & 0_{2} \end{smallmatrix} \right] X_0 + \left[ \begin{smallmatrix} 0_{n,2} \\ c I_2 \end{smallmatrix} \right] \left( \chi(1) - \chi(0) \right) , \\
	\end{array}
 \end{equation}
 with $ X_0 = [ X^{\top} \ \Chi^{\top}_0 ]^{\top}$. 
 Hence, associated to the original system \eqref{eq:problem}, we propose a set of equation \eqref{eq:sys2_2}-\eqref{eq:extendedSystem}. They are linked to system \eqref{eq:problem} but enriched by extra dynamics aiming at representing the interconnection between the extended finite dimensional system and the two transport equations. Nevertheless, these two systems are not equivalent. The transport equation gives trajectories of $u_t$ and $u_x$ but $u$ can be defined within a constant. The second set of equations just induces a formulation for a Lyapunov functional candidate which is developed in the subsection below.

\subsection{Lyapunov functional and stability analysis}

The main idea is to rely on the auxiliary variables satisfying equations \eqref{eq:sys2_2} and \eqref{eq:extendedSystem} to define a Lyapunov functional for the original system \eqref{eq:problem}. The associated Lyapunov function of ODE \eqref{eq:extendedSystem} is a simple quadratic term on the state $X_0^{\top} P_0 X_0$, with $P_0 \in \S^{n+2}_+$. It introduces automatically a cross-term between the ODE and the original PDE through $X_0$. Hence, the auxiliary equations of the previous paragraph shows a coupling between a finite dimensional LTI system and a transport PDE.
For the latter, inspired from the literature on time-delay systems \cite{baudouin:hal-01310306,4060979}, we provide a Lyapunov functional:
\begin{equation*}
	\V(u) = \int_0^1 \chi^{\top}(x) \left( S + xR \right) \chi(x) dx,
	\label{eq:Vcal}
\end{equation*}
with $S, R \in \mathbb{S}^2_+$.	 The use of the modified Riemann coordinates enables us to consider full matrices $S$ and $R$. 
As the transport described by the variable $\chi$ is going backward, $R$ is multiplied by $x$. 
Thereby, we propose a Lyapunov functional for system \eqref{eq:problem} expressed with the extended state variable $X_0$:
\begin{equation}
	V_0(X_0, u) = X_0^{\top} P_0 X_0 + \V(u).
	\label{eq:V1}
\end{equation}
This Lyapunov functional is actually made up of three terms:
\begin{itemize}
	\item A quadratic term in $X$ introduced by the ODE;
	\item A functional $\V$ for the stability of the string equation;
	\item A cross-term between $\Chi_0$ and $X$ described by the extended state $X_0$.
\end{itemize}
The idea is that this last contribution is interesting since we may consider the stability of system \eqref{eq:problem} with an unstable ODE, stabilized thanks to the string equation.
At this stage, a stability theorem can be derived using the Lyapunov functional $V_0$.

\begin{theo}\label{th0} Consider the system defined in \eqref{eq:problem} with a given speed $c$, a viscous damping $c_0 > 0$ with initial conditions $(X^0, u^0, v^0) \in \mathcal{D}(T)$. 
Assume there exist $P_0 \in \S^{n+2}_+$ and $S, R \in \mathbb{S}^2_+$ such that the following LMI holds: 
\begin{equation} 
	\label{eq:defPsi0}
	\!\Psi_0 \! = \! \text{He} \left( Z_0^{\top} P_0 F_0 \right) \! - \! c \tilde{R}_0 + c \! \left( H_0^{\top} (S+R) H_0 - G_0^{\top} S G_0 \right) \prec 0
\end{equation}
where
	\begin{equation} \label{eq:defTheo1}
		\!\!\begin{array}{ll}
			F_0 = \left[ \begin{matrix} I_{n+2} & \!0_{n+2, 2} \end{matrix} \right], & Z_0 = \left[ \begin{matrix} \mathcal{N}_0^{\top} & c(H_0 \! - \!G_0)^{\top} \end{matrix} \right]^{\!\top}\!\!, \\
			\mathcal{N}_0 = \left[ \begin{matrix} A\!+\!BK & \tilde{B} &  0_{n, 2} \end{matrix} \right] , &\tilde{R}_0 = \text{diag}\left( 0_n, R, 0_2 \right), \\
		\end{array}
	\end{equation}
	\[
		\!\!\begin{array}{ll}
			G_0 = \left[ \begin{matrix} 0_{2,n\!+\!2} &\!\!\!\! g \end{matrix} \right] \!+\! \left[ \begin{smallmatrix} K \\ 0_{1, n} \end{smallmatrix} \right] \mathcal{N}_0, \quad \quad & g = \left[ \begin{smallmatrix} 0 & 1 \\ 1 + c c_0 & 0 \end{smallmatrix} \right], \\
			H_0 = \left[ \begin{matrix} 0_{2, n\!+\!2} &\!\!\!\! h \end{matrix} \right] \!+\! \left[ \begin{smallmatrix} 0_{1, n} \\ K \end{smallmatrix} \right] \mathcal{N}_0, & h = \left[ \begin{smallmatrix} 1 - c c_0 & 0 \\ 0 & \!\!\!\!-1 \end{smallmatrix} \right].
		\end{array}
	\]
	
Then, there exists a unique solution to system \eqref{eq:problem} and it is exponentially stable in the sense of $\|\cdot \|_{\mathcal H}$ i.e. there exist $\gamma \supeq 1, \delta > 0$ such that the following estimate holds for $t > 0$:
	\begin{equation}
		\| (X(t), u(t), u_t(t)) \|^2_{\mathcal H} \infeq \gamma e^{-\delta t} \| (X^0, u^0, v^0) \|^2_{\mathcal H}.
		\label{eq:energyDecay}
	\end{equation}
\end{theo}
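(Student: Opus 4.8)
The plan is to combine three ingredients. First, a two‑sided estimate showing that $V_0$ — restricted to the state space on which the Dirichlet relation $u(0)=KX$ holds — is equivalent to $\|\cdot\|^2_{\mathcal H}$. Second, the observation that $\Psi_0\prec0$ forces $A+BK$ to be non‑singular, so Proposition~\ref{sec:existence} applies and yields a unique, sufficiently regular solution. Third, the computation of $\frac{d}{dt}V_0$ along solutions, which reduces to a quadratic form in a finite‑dimensional augmented vector and turns $\Psi_0\prec0$ into $\dot V_0\le-\delta V_0$; Gronwall and the equivalence then give \eqref{eq:energyDecay}.

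\emph{Equivalence of norms.} Since $S,R\in\S^2_+$, for $x\in\Omega$ one has $0\prec S\preceq S+xR\preceq S+R$, hence $\lambda_{\min}(S)\|\chi\|^2\le\V(u)\le\lambda_{\max}(S+R)\|\chi\|^2$. With $P_0\succ0$ and $|\Chi_0|^2=\big|\int_0^1\chi\big|^2\le\|\chi\|^2$ (Cauchy--Schwarz), this gives $c_1(|X|^2+\|\chi\|^2)\le V_0\le c_2(|X|^2+\|\chi\|^2)$ for some $0<c_1\le c_2$. By \eqref{eq:norm_chi_u}, $\|\chi\|^2=2(\|u_t\|^2+c^2\|u_x\|^2)$, and along a solution \eqref{eq:boundary1} gives $u(x)=KX+\int_0^x u_x$, so $\|u\|^2\le 2|K|^2|X|^2+2\|u_x\|^2$. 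Hence $|X|^2+\|\chi\|^2$ is equivalent to $\|(X,u,u_t)\|^2_{\mathcal H}$, giving $\alpha\ge\beta>0$ with $\beta\|\cdot\|^2_{\mathcal H}\le V_0\le\alpha\|\cdot\|^2_{\mathcal H}$ (the upper bound needs no boundary relation, so it applies to the initial data as well).

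\emph{Well-posedness.} First $\Psi_0\prec0$ forces $A+BK$ non-singular: for $X_e$ with $(A+BK)X_e=0$ put $\xi=[X_e^\top\ 0_{1,2}\ 0\ 0]^\top\in\R^{n+4}$; then $\mathcal N_0\xi=(A+BK)X_e=0$, so $Z_0\xi=0$, $G_0\xi=H_0\xi=0$, $\tilde R_0\xi=0$, whence $\xi^\top\Psi_0\xi=0$, which by $\Psi_0\prec0$ forces $\xi=0$, i.e. $X_e=0$. On the state space (the closure of $\mathcal D(T)$, where $u(0)=KX$) the quadratic form $V_0$ is, by Step~1, an inner-product norm equivalent to $\|\cdot\|^2_{\mathcal H}$, and the computation below shows $T$ is dissipative for it; Proposition~\ref{sec:existence} then gives the unique solution with $(X,u,u_t)\in C(0,+\infty,\mathcal H)$, classical for data in $\mathcal D(T)$, which legitimizes the manipulations that follow.

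\emph{Decay.} Introduce the augmented vector $\xi(t)=\big[X^\top,\ \Chi_0^\top,\ u_t(1,t),\ c\,u_x(0,t)\big]^\top$. Using the definition of $\chi$ and the boundary conditions \eqref{eq:boundary1}--\eqref{eq:boundary2} (which yield $u_t(0)=K\dot X$ and $u_x(1)=-c_0u_t(1)$), together with $\dot X=(A+BK)X+\tilde B\Chi_0$, one verifies the identities $X_0=F_0\xi$, $\dot X_0=Z_0\xi$, $\chi(0)=G_0\xi$, $\chi(1)=H_0\xi$. Then $\frac{d}{dt}(X_0^\top P_0X_0)=\xi^\top\text{He}(Z_0^\top P_0F_0)\xi$, while integrating by parts in $\V$ using $\chi_t=c\chi_x$ gives $\dot\V=c\,\chi(1)^\top(S+R)\chi(1)-c\,\chi(0)^\top S\,\chi(0)-c\int_0^1\chi^\top R\chi$. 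Bounding, for a small $\theta\in(0,1)$, $\int_0^1\chi^\top R\chi\ge(1-\theta)\Chi_0^\top R\Chi_0+\theta\lambda_{\min}(R)\|\chi\|^2=(1-\theta)\,\xi^\top\tilde R_0\xi+\theta\lambda_{\min}(R)\|\chi\|^2$ and substituting the four identities yields
\[
\dot V_0\ \le\ \xi^\top\big(\Psi_0+c\theta\tilde R_0\big)\xi\ -\ c\theta\lambda_{\min}(R)\|\chi\|^2 .
\]
Since $\Psi_0\prec0$ is an open condition, fix $\theta$ so that $\Psi_0+c\theta\tilde R_0\preceq-\nu I$ with $\nu>0$; as $|\xi|^2\ge|X|^2$ this gives $\dot V_0\le-\nu|X|^2-c\theta\lambda_{\min}(R)\|\chi\|^2\le-\mu(|X|^2+\|\chi\|^2)$, and by the upper bound $V_0\le c_2(|X|^2+\|\chi\|^2)$ this reads $\dot V_0\le-\delta V_0$ with $\delta=\mu/c_2$. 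Gronwall gives $V_0(t)\le e^{-\delta t}V_0(0)$, and $\beta\|\cdot\|^2_{\mathcal H}\le V_0\le\alpha\|\cdot\|^2_{\mathcal H}$ yields \eqref{eq:energyDecay} with $\gamma=\alpha/\beta\ge1$.

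\emph{Main obstacle.} The delicate points are the algebraic bookkeeping — correctly identifying $\xi$ and checking the $F_0,Z_0,G_0,H_0$ identities from the Riemann coordinates and the boundary conditions — and, more substantively, the passage from the finite-dimensional bound $\xi^\top\Psi_0\xi$ to $\dot V_0\le-\delta V_0$: the estimate $\int\chi^\top R\chi\ge\Chi_0^\top R\Chi_0$ alone only controls the finite-dimensional part of the state, so one must retain a fraction of $\int\chi^\top R\chi$ (here $R\succ0$, not merely $R\succeq0$, is essential) to also dominate $\|\chi\|^2$ and close the inequality against $V_0$.
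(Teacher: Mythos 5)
Your proposal is correct and follows essentially the same route as the paper's proof: the same augmented vector $\xi_0=[X^{\top}\ \Chi_0^{\top}\ u_t(1)\ c u_x(0)]^{\top}$ with the identities $X_0=F_0\xi_0$, $\dot X_0=Z_0\xi_0$, $\chi(0)=G_0\xi_0$, $\chi(1)=H_0\xi_0$, the integration by parts of $\V$, Jensen's inequality together with a retained fraction of $\int_0^1\chi^{\top}R\chi\,dx$ to dominate $\|\chi\|^2$, the Poincar\'e-type bound with $u(0)=KX$ for the norm equivalence, and Propositions \ref{sec:existence} and \ref{sec:propEquilibrium} for well-posedness. The two local variations --- the kernel-vector argument showing $\Psi_0\prec 0$ forces $A+BK$ nonsingular (instead of the paper's $(1,1)$-block argument), and closing the decay via $\dot V_0\le-\mu(|X|^2+\|\chi\|^2)$ compared against the upper bound $V_0\le c_2(|X|^2+\|\chi\|^2)$ rather than re-deriving $\dot V_0\le-\varepsilon_3\|\cdot\|^2_{\mathcal H}$ with Lemma \ref{sec:normU} --- are both valid and do not change the architecture.
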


\begin{remark} \label{rem:S} 
The LMI  $\Psi_0\prec0$ includes a necessary condition given by $e_{3}^{\top} \Psi_0 e_{3} \prec 0,$ with $e_{3} = \left[ \begin{smallmatrix} 0_{2,n+2} & I_2 \end{smallmatrix} \right]^{\top}$, which is $h^\top (S+R)h-g^\top S g\prec 0$. This inequality is guaranteed if and only if the matrix $g^{-1}h$ has its eigenvalues inside the unit cycle of the complex plan, i.e. $c_0>0$, which is consistent with the result on exponential stability of \cite{helmicki1991ill}.  
\end{remark}

\subsection{Proof of Theorem \ref{th0}}

The proof of stability is presented below.
\subsubsection{Preliminaries}
As a first step of this proof, an inequality on $u$ is presented below.
\begin{lemma} \label{sec:normU}
	For $u \in H^1$, the following inequality holds:
	\[
		\|u\|^2 \infeq 2 \|u_x\|^2 + 2|u(0)|^2.
	\]
\end{lemma}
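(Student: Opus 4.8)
The plan is to use the fundamental theorem of calculus to control $u(x)$ pointwise by its boundary value at $x=0$ and its derivative, then integrate over $\Omega$. Since $u \in H^1$, we may write, for every $x \in \Omega$,
\[
	u(x) = u(0) + \int_0^x u_x(s)\, ds.
\]
First I would apply the elementary inequality $|a+b|^2 \leq 2|a|^2 + 2|b|^2$ to get $|u(x)|^2 \leq 2|u(0)|^2 + 2\left| \int_0^x u_x(s)\, ds \right|^2$.

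Next I would bound the integral term by the Cauchy--Schwarz inequality: $\left| \int_0^x u_x(s)\, ds \right|^2 \leq x \int_0^x |u_x(s)|^2\, ds \leq \int_0^1 |u_x(s)|^2\, ds = \|u_x\|^2$, where the second step uses $x \leq 1$ on $\Omega = [0,1]$ and enlarges the domain of integration. Combining, $|u(x)|^2 \leq 2|u(0)|^2 + 2\|u_x\|^2$ for all $x \in \Omega$.

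Finally I would integrate this pointwise estimate over $\Omega$; since the right-hand side is independent of $x$ and $|\Omega| = 1$, this yields directly $\|u\|^2 = \int_\Omega |u(x)|^2\, dx \leq 2\|u_x\|^2 + 2|u(0)|^2$. There is no real obstacle here: the only point needing a word of care is the legitimacy of the representation formula and of evaluating $u$ at the boundary, which is standard for $H^1$ functions on a bounded interval (Sobolev embedding $H^1(\Omega) \hookrightarrow C(\overline{\Omega})$ in one dimension), so the argument is complete.
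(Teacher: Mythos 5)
Your argument is correct and is essentially the paper's own proof: the paper also writes $u(x) = u(0) + \int_0^x u_s(s)\,ds$, applies the same quadratic (Young) inequality and a Jensen/Cauchy--Schwarz bound on the integral term, and concludes by integrating the pointwise estimate over $\Omega$. No discrepancy worth noting.
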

\vspace{-0.1cm}
\begin{proof}
Since $u_x \in H^1(\Omega)$, Young and Jensen inequalities imply that for all $x \in \Omega$:
	\begin{equation*} 
	u(x)^2 = \left( \int_0^x u_s(s) ds + u(0) \right)^2  \infeq 2 \int_0^x u_s^2(s) ds + 2 |u(0)|^2.
	\end{equation*}
	\vspace{-0.05cm}
\end{proof}
The proof of Theorem \ref{th0} consists in explaining how the LMI condition presented in the statement implies that there exist a functional $V$ and three positive scalars $\varepsilon_1, \varepsilon_2$ and $\varepsilon_3$ such that the following inequalities hold:
\vspace{-0.2cm}
\begin{equation} \label{eq:expo}
	\hspace*{-0.45cm}
	\begin{array}{llllllll}
		 \varepsilon_1 \| (X, u, u_t) \|^2_{\mathcal H} &\!\!\!\infeq &\!\!\!V(X, u) &\!\!\!\infeq &\!\quad\! \varepsilon_2 \| (X, u, u_t) \|^2_{\mathcal H}, \\
		&&\!\!\! \dot{V}(X, u) &\!\!\!\infeq &\!- \varepsilon_3 \| (X, u, u_t) \|^2_{\mathcal H}.
	\end{array}
\end{equation}

The next steps aim at proving \eqref{eq:expo} in order to obtain the convergence of the state to the equilibrium.

\subsubsection{Well-posedness} 
If the conditions of Theorem \ref{th0} are satisfied, then the inequality $\Psi_0 (1,1) = e_{1}^{\top} \Psi_0 e_{1} \prec 0$ holds where $e_{1} = \left[ \begin{smallmatrix} I_n & 0_{n,4} \end{smallmatrix} \right]^{\top}$. 
After some simplifications, we get $\text{He} \left( (A+BK)^{\top} Q \right) \prec 0$, for some matrix $Q$ depending on $R$, $S$ and $P_0$. This strict inequality requires that $A+BK$ is non singular and, in light of Propositions \ref{sec:existence} and \ref{sec:propEquilibrium}, the problem is indeed well-posed and $0_{\mathcal{H}}$ is the unique equilibrium point.
Furthermore, note that since $Q$ is not necessarily symmetric, then matrix $A+BK$ does not have to be Hurwitz.

\subsubsection{Existence of $\varepsilon_1$} 

	Conditions $P_0 \succ 0$ and $S, R \in \mathbb{S}^2_+$ mean that there exists $\varepsilon_1 > 0$, such that for all $x \in \Omega$:
	\vspace{-0.3cm}
	\[
		\begin{array}{rcl}
			P_0 \!\!&\succeq& \varepsilon_1 \text{diag} \left( I_{n} +2 K^{\top}K, 0_2 \right) , \\
			S+xR \ \succeq \ S &\succeq& \varepsilon_1 \frac{2 + c^2}{2 c^2} I_2.
		\end{array}
	\]
	These inequalities lead to:
	\[\begin{array}{lcl}
		V_0(X_0,u) \!\!\! & \supeq & \varepsilon_1 \left( |X|_n^2 +|KX|^2+  \frac{2 + c^2}{2 c^2} \|\chi\|^2 \right) \\
		&& + \int_0^1\chi^\top(x)\left(S+xR-\varepsilon_1\frac{2 + c^2}{2 c^2}I_2\right)\chi(x)dx\\
		&\supeq& \varepsilon_1 \left( |X|_n^2 +|KX|^2+  \frac{2 + c^2}{2 c^2} \|\chi\|^2 \right).\\
	\end{array}\]
	Using boundary condition \eqref{eq:boundary1} and equality \eqref{eq:norm_chi_u}, it becomes
	\[\begin{array}{lcl}
		V_0(X_0,u) \!\!\! &\supeq& \varepsilon_1 \left( |X|_n^2 +\|u\|^2+ \|u_t\|^2 + c^2\|u_x\|^2\right) \\
		&&+\frac{2\varepsilon_1}{c^2} \|u_t\|^2 + \varepsilon_1\left(2 \|u_x\|^2+2|u(0)|^2-\|u\|^2 \right).
	\end{array}\]
	
Then, we apply Lemma \ref{sec:normU} to ensure that the last term is positive. It follows that $V_0(X_0,u) \supeq \varepsilon_1 \ \| (X, u, u_t) \|^2_{\mathcal H}$, which ends the proof of existence of $\varepsilon_1$.

\subsubsection{Existence of $\varepsilon_2$}
	
Since $P_0 \in \mathbb{S}^{n+2}_+$ and $S, R \in \mathbb{S}^2_+$, there exists $\varepsilon_2 > 0$ such that for $x \in (0, 1)$:
\[
	\begin{array}{rcl}
		P_0 & \preceq & \text{diag} (\varepsilon_2 I_n,  \frac{\varepsilon_2}{4} I_2), \\
		S + xR \ \preceq \ S + R & \preceq & \frac{\varepsilon_2}{4} I_2 .
	\end{array}
\]

From equation \eqref{eq:V1}, we get:
\begin{equation}\label{ineqth0:eps2}
\begin{array}{lcl}
	\!\!\!V_0(X_0,u) \!\!\! &\infeq&  \varepsilon_2 \left(|X|_n^2 + \frac{1}{4} \Chi_0^{\top} \Chi_0 +\frac{1}{4} \|\chi\|^2 \right)\\
		&&+ \int_0^1 \chi^{\top}(x) \left(S+xR -\frac{\varepsilon_2}{4}I_2\right)\chi(x) dx \\
		&\infeq&  \varepsilon_2 \left(|X|_n^2 + \frac{1}{2} \|\chi\|^2\right)\\
\end{array}
\end{equation}
where we have used $\Chi_0^{\top} \Chi_0 \infeq  \|\chi\|^2$, which is a result of Jensen's inequality. The proof of the existence of $\varepsilon_2$ ends by using \eqref{eq:norm_chi_u} so that we get:
\begin{equation*}
	V_0(X_0,u) \infeq \!\varepsilon_2\!\left( |X|_n^2 \!+\!  \|u_t\|^2  \!+\!  c^2\|u_x\|^2 \right) \infeq \varepsilon_2 \| (X, u, u_t) \|^2_{\mathcal H}.
\end{equation*}

\subsubsection{Existence of $\varepsilon_3$}

Differentiating $V_0$ in \eqref{eq:V1} along the trajectories of system \eqref{eq:problem} leads to
\begin{equation*}
	\begin{array}{l}
		\dot{V}_0(X_0,u) = \text{He} \left( \left[ \begin{smallmatrix} \dot{X} \\ \dot{\Chi}_0 \end{smallmatrix} \right]^{\top} P_0 \left[ \begin{smallmatrix} X \\ \Chi_0 \end{smallmatrix} \right] \right) + \dot{\V}(u). \\
	\end{array}
\end{equation*}
Our goal is to express an upper bound of $\dot V_0$ thanks to the extended vector $\xi_0$ defined as follows:
	\begin{equation} \label{eq:xi1}
		\xi_0= \left[ \begin{matrix} X^{\top} & \Chi_0^{\top} & u_t(1) & c u_x(0) \end{matrix} \right]^{\top}.
	\end{equation}
	
Let us first concentrate on $\dot{\V}$.  Equation \eqref{eq:sys2_2} yields:
\begin{equation}
		\dot{\V}(u) = \displaystyle 2 c \int_0^1 \chi_x^{\top}(x,t) (S+xR) \chi(x,t) dx.
	\label{eq:Vdot2}
\end{equation}
Integrating by parts the last expression leads to:
\vspace{-0.3cm}
\begin{multline}
	\dot{\V}(u) =  c \left(  \chi^{\top}(1) (S+R) \chi(1) - \chi^{\top}(0) S \chi(0)  \vphantom{\int_0^1 \chi^{\top}(x) R \chi(x) dx } \right. \\
	\left. - \int_0^1 \chi^{\top}(x) R \chi(x) dx \right).
	\label{eq:Vcaldot}
\end{multline}

Then we note that $\dot X= \mathcal N_0\xi_0$, $\dot{\Chi}_0 = c(H_0 - G_0) \xi_0$, $\chi(1) = H_0 \xi_0$, $\chi(0) = G_0\xi_0,$ with $\xi_0$ defined in \eqref{eq:xi1} and the matrices above in \eqref{eq:defTheo1}. 
We get $X_0=F_0\xi_0$ and $\dot X_0 = Z_0\xi_0$ which results in the following expression for $\dot V_0$:
\begin{multline} \label{eq:Vdot}
	\!\!\!\!\!\! \dot{V}_0(X_0,u) \! =\xi_0^{\!\top} \!\! \left( \text{He} \left( \!Z_0^{\!\top} P_0 F_0 \right) \!+\! c H_0^{\!\top} \! ( S \!+\! R) H_0 \!-\! cG_0^{\!\top} S G_0 \right) \! \xi_0\\
 - c \int_0^1 \chi^{\top}(x) R \chi(x) dx.
\end{multline}
Then, using the definition of $\Psi_0$ given in \eqref{eq:defPsi0}, the previous expression can be rewritten as follows:
\begin{equation} \label{eq:Vdot13}
	\!\dot{V}_0(X_0,u) = \xi_0^{\!\top} \Psi_0 \xi_0 + c \Chi_0^{\top} R \Chi_0 - c \!\int_0^1\! \chi^{\top}(x) R \chi(x) dx.
\end{equation}
Since  $R \succ 0$ and $\Psi_0 \prec 0$, there exists $\varepsilon_3 > 0$ such that:
\begin{subequations}
	\begin{align}
		R \succeq & \ \frac{\varepsilon_3}{2c} \frac{2+c^2}{c^2} I_2, \label{eq:Rpos} \\
		\Psi_0 \preceq & - \varepsilon_3\text{diag} \left( I_n + 2 K^{\top}K, \frac{2+c^2}{2 c^2} I_2, 0_{2} \right) . \label{eq:PsiNeg}
	\end{align}
\end{subequations}
Using \eqref{eq:PsiNeg} and the boundary condition $u(0)=KX$, equation \eqref{eq:Vdot13} becomes:
\begin{equation*}
	\begin{array}{lcl}
		\dot{V}_0(X_0,u) \!\!\! & \! \infeq \! & - \varepsilon_3 \left( |X|^2_n + 2|u(0)|^2 + \frac{2+c^2}{2c^2} \|\chi\|^2 \right) \\
			&&+ c \Chi_0^{\!\top} \left(R - \frac{\varepsilon_3}{2c} \frac{2+c^2}{c^2}I_2 \right) \Chi_0 \\
			&&- c \int_0^1 \chi^{\!\top}(x) \left(R - \frac{\varepsilon_3}{2c} \frac{2+c^2}{c^2}I_2 \right) \chi(x) dx,
	\end{array}
\end{equation*}
so that we get by application of Jensen's inequality:
\begin{equation} \label{ineqth0:eps3}
	\dot{V}_0(X_0,u)  \infeq - \varepsilon_3 \left( |X|^2_n + 2|u(0)|^2 + \frac{2+c^2}{2c^2} \|\chi\|^2 \right).
\end{equation} 

The most important part of the proof lies in the following trick. Since \eqref{eq:norm_chi_u} holds, we get:
\begin{equation*}
	\begin{array}{lcl}
		\dot{V}_0(X_0,u)\!\!\!\!& \infeq \!\!\! &
		- \varepsilon_3\|(X,u, u_t)\|_{\mathcal H}^2  -\varepsilon_3\frac{2}{c^2}\|u_t\|^2\\
		&&- \varepsilon_3 \left( 2|u(0)|^2+ 2\|u_x\|^2-\|u\|^2\right).
	\end{array}
\end{equation*}
\\
Moreover, Lemma \ref{sec:normU} ensures that the last term of the previous expression is negative so that we have
$\dot{V}_0(X_0,u) \infeq - \varepsilon_3 \| (X, u, u_t) \|^2_{\mathcal H}$, which concludes this proof of existence.
		
\subsubsection{Conclusion} Finally, there exist $\varepsilon_1, \varepsilon_2, \varepsilon_3 \!\!>\!\! 0$ such that \eqref{eq:expo} holds for a functional $V_0$. Hence $V_0(\cdot)$ defines an equivalent norm to $\|\cdot\|_{\mathcal{H}}$ and is dissipative. It means, according to Propositions \ref{sec:existence} and \ref{sec:propEquilibrium}, that there exists a unique solution to system \eqref{eq:problem} in $\mathcal{H}$.
	Equation \eqref{eq:expo} also brings: $\dot{V}_0(X_0,u) + \frac{\varepsilon_3}{\varepsilon_2} V_0(X_0,u) \infeq 0$ 
	and
	\begin{equation*}
\forall t > 0, \quad \| (X(t), u(t), u_t(t)) \|^2_{\mathcal H} \infeq \frac{\varepsilon_2}{\varepsilon_1}e^{-\frac{\varepsilon_3}{\varepsilon_2} t} \| (X^0, u^0 ,v^0) \|^2_{\mathcal H} ,
	\end{equation*}
which shows the exponential convergence of all the trajectories of system \eqref{eq:problem} to the unique equilibrium $0_{\mathcal{H}}$. In other words, the solution to system \eqref{eq:problem} is exponentially stable.

\begin{remark}
It is also worth noting that LMI \eqref{eq:defPsi0} can be transformed to extend this theorem to uncertain ODE systems subject to polytopic-type uncertainties for instance.
\end{remark}

\section{Extended Stability Analysis}
In the previous analysis, we have proposed an auxiliary system presented in \eqref{eq:sys2_2}-\eqref{eq:extendedSystem} helping us to define a new Lyapunov functional for system~\eqref{eq:problem}. The notable aspect is that the term $\Chi_0 = \int_0^1 \chi(x) dx$ appears naturally in the dynamics of system~\eqref{eq:problem}. In light of the previous work on integral inequalities in \cite{seuret:hal-01065142}, this term can also be interpreted as the projection of the modified state $\chi$ over the set of constant functions in the sense of the canonical inner product in $L^2$. One may therefore enrich \eqref{eq:extendedSystem} by additional projections of $\chi$ over the higher order Legendre polynomials, as one can read in \cite{seuret:hal-01065142,baudouin:hal-01310306} in the context of time-delay systems. The family of shifted Legendre polynomials, denoted $\left\{ \mathcal{L}_k \right\}_{k \in \mathbb{N}}$ and defined over  $[0, 1]$  by $\mathcal{L}_k(x) = (-1)^k \sum_{l = 0}^k (-1)^l \left( \begin{smallmatrix} k \\ l \end{smallmatrix} \right) \left( \begin{smallmatrix} k+l \\ l \end{smallmatrix} \right) x^l$ with $\left( \begin{smallmatrix} k \\ l \end{smallmatrix} \right) = \frac{k!}{l! (k-l)!}$, form an orthogonal family with respect to the $L^2$ inner product (see \cite{courant1966courant} for more details). 

\subsection{Preliminaries}
The previous discussion leads to the definition of the projection of any function $\chi$ in $L^2$ on the family $\left\{ \mathcal{L}_k \right\}_{k \in \mathbb{N}}$:
\[
	\forall k \in \mathbb N, \quad \Chi_k := \int_0^1 \chi(x) \mathcal{L}_k(x) dx, 
\]
An augmented vector $X_N$ is naturally derived for any $N \in \mathbb N$:
 \begin{equation} \label{eq:notationN}
 	\begin{array}{l}
 		X_N = \left[ \begin{matrix} X^{\top} & \Chi^{\top}_0 & \cdots & \Chi^{\top}_{N} \end{matrix} \right]^{\top}.
	\end{array}
\end{equation}

Following the same methodology as in Theorem \ref{th0}, this specific structure suggests to introduce a new Lyapunov functional, inspired from \eqref{eq:V1}, with $P_N \in \mathbb{S}^{n+2(N+1)}_+$:
\begin{equation} \label{eq:VN}
	V_N(X_N, u) = X_N^{\top} P_N X_N + \V(u).
\end{equation}

In order to follow the same procedure, several technical extensions are required. Indeed, the stability conditions issued from the functional $V_0$ are proved using Jensen's inequality and an explicit expression of the time derivative of $\Chi_0$. Therefore, it is necessary to provide an extended version of Jensen's inequality and of this differentiation rule. These technicals steps are summarized in the two following  lemmas. 

\begin{lemma}\label{Bess}
	For any function $\chi \in L^2$ and symmetric positive matrix $R \in \mathbb S^2_+$, the following Bessel-like integral inequality holds for all $N\in \mathbb N$:
	\begin{equation}\label{eq:Bessel}
		\int_{0}^1 \chi^{\top}(x) R \chi(x) dx  \supeq \sum_{k=0}^{N} (2k+1)  \Chi_k^{\top} R \Chi_k.
	\end{equation}
\end{lemma}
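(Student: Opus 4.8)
The plan is to expand the obvious inequality $\int_0^1 (\chi(x)-p(x))^\top R\,(\chi(x)-p(x))\,dx \geq 0$, where $p$ is the $R$-weighted $L^2$-orthogonal projection of $\chi$ onto $\mathrm{span}\{\mathcal{L}_0,\dots,\mathcal{L}_N\}$, and recognize that the cross terms collapse so that what remains is exactly the claimed bound. Concretely, since $R \succ 0$, I would write $p(x) = \sum_{k=0}^N c_k \mathcal{L}_k(x)$ with vector coefficients $c_k \in \R^2$, expand the quadratic form, and use the orthogonality relation $\int_0^1 \mathcal{L}_j(x)\mathcal{L}_k(x)\,dx = \frac{1}{2k+1}\delta_{jk}$ (the normalization of the shifted Legendre polynomials, recalled from \cite{courant1966courant}).

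First I would set $c_k = (2k+1)\Chi_k$, so that $\int_0^1 \chi(x)\mathcal{L}_k(x)\,dx = \Chi_k = \frac{1}{2k+1} c_k = \int_0^1 p(x)\mathcal{L}_k(x)\,dx$; this is the key algebraic choice making $p$ a genuine projection. Then expanding,
\begin{equation*}
	0 \leq \int_0^1 (\chi - p)^\top R\,(\chi - p)\,dx = \int_0^1 \chi^\top R \chi\,dx - 2\sum_{k=0}^N c_k^\top R \Chi_k + \sum_{j,k} c_j^\top R c_k \int_0^1 \mathcal{L}_j \mathcal{L}_k\,dx.
\end{equation*}
The last sum reduces via orthogonality to $\sum_{k=0}^N \frac{1}{2k+1} c_k^\top R c_k = \sum_{k=0}^N (2k+1)\Chi_k^\top R \Chi_k$, and the cross term is $2\sum_{k=0}^N (2k+1)\Chi_k^\top R \Chi_k$, so the middle and last terms combine to $-\sum_{k=0}^N (2k+1)\Chi_k^\top R\Chi_k$. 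Rearranging gives exactly \eqref{eq:Bessel}.

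The only genuine subtlety — which I expect to be the main point to get right rather than a true obstacle — is handling the matrix weight $R$ together with vector-valued $\chi$: one must be careful that the scalar orthogonality of the $\mathcal{L}_k$ suffices and no cross-component structure of $R$ interferes, which it does not because $R$ factors out of the scalar integrals $\int_0^1 \mathcal{L}_j\mathcal{L}_k\,dx$. A clean way to present this, avoiding index bookkeeping, is to factor $R = M^\top M$ and apply the standard scalar Bessel inequality componentwise to $M\chi$; either route works. The integrability of the cross terms is immediate since $\chi \in L^2$ and the $\mathcal{L}_k$ are polynomials, hence bounded on $[0,1]$.
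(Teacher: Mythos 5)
Your proposal is correct and is essentially the paper's own argument: the paper sketches the proof as expanding the positive quantity $\|R^{1/2}\chi_N\|^2$ with $\chi_N(x)=\chi(x)-\sum_{k=0}^{N}(2k+1)\Chi_k\mathcal{L}_k(x)$, i.e.\ exactly your projection with coefficients $(2k+1)\Chi_k$ and the same use of the orthogonality $\int_0^1\mathcal{L}_j\mathcal{L}_k\,dx=\tfrac{1}{2k+1}\delta_{jk}$. Your remark about factoring $R=M^{\top}M$ is just the paper's $R^{1/2}$ device, so nothing is missing.
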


This inequality includes Jensen's inequality as the particular case $N=0$, suggesting that this lemma is an appropriate extension and should help to address the stability analysis using the new Lyapunov functional \eqref{eq:VN} with the augmented state $X_N$ defined in \eqref{eq:notationN}. 

The proof of Lemma~\ref{Bess} is based on the expansion of the positive scalar $\| R^{1/2}\chi_N\|^2$ 
where $\chi_N(x)=\chi(x)- \sum_{k=0}^{N} (2k+1) \Chi_k L_k(x)$ can be interpreted as the approximation error between $\chi$ and its orthogonal projection over the family $\{ \mathcal{L}_k \}_{k \infeq N}$. 

The next lemma is concerned by the differentiation of $\Chi_k$.
\begin{lemma}\label{lem:Chi_k}
	For any function $\chi \in L^2$, the following expression holds for any $N$ in $\mathbb N$:
	\begin{equation*}
		\left[ \begin{smallmatrix}  \dot \Chi_0 \\ \vdots \\ \dot \Chi_{N} \end{smallmatrix} \right] = c\mathbb 1_N\chi(1)-c\bar {\mathbb 1}_N\chi(0) -cL_N \left[ \begin{smallmatrix}  \Chi_0 \\ \vdots \\ \Chi_{N} \end{smallmatrix} \right],
	\end{equation*}
	where
	 \vspace{-0.05cm}
	\begin{equation}\label{def_LN}
		\begin{array}{rcllcl}
			\!\!\! L_N\!=\!\left[\!\begin{smallmatrix} 
			\ell_{0,0}I_2 & \cdots& 0_2 \\ 
			\vdots & \ddots &\vdots\\ 
			\ell_{N,0}I_2 & \cdots & \ell_{N,N}I_2 \\ 
			\end{smallmatrix}\!\right]\!\!,
			\mathbb{1}_N\!=\!\left[\!\begin{smallmatrix} I_2  \\ \vdots \\ I_2\end{smallmatrix}\!\right]\!\!,
			\bar{\mathbb 1}_N\!=\!\left[\!\begin{smallmatrix}  I_2  \\  \vdots \\(-1)^{N} I_2\end{smallmatrix}\!\right],
		\end{array}
	\end{equation}
	with $\ell_{k,j} = (2j+1)(1 - (-1)^{j+k})$ if $ j \infeq k$ and $0$ otherwise.
\end{lemma}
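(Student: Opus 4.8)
\textbf{Proof plan for Lemma~\ref{lem:Chi_k}.}
The plan is to compute $\dot\Chi_k = \frac{d}{dt}\int_0^1 \chi(x,t)\mathcal{L}_k(x)\,dx$ directly from the transport dynamics \eqref{eq:sys2_2}. Since $\chi_t(x,t)=c\chi_x(x,t)$, I would write $\dot\Chi_k = c\int_0^1 \chi_x(x)\mathcal{L}_k(x)\,dx$ and integrate by parts, obtaining
\[
\dot\Chi_k = c\bigl[\chi(x)\mathcal{L}_k(x)\bigr]_0^1 - c\int_0^1 \chi(x)\mathcal{L}_k'(x)\,dx
= c\,\mathcal{L}_k(1)\chi(1) - c\,\mathcal{L}_k(0)\chi(0) - c\int_0^1 \chi(x)\mathcal{L}_k'(x)\,dx.
\]
The boundary values of the shifted Legendre polynomials are the standard facts $\mathcal{L}_k(1)=1$ and $\mathcal{L}_k(0)=(-1)^k$, which explains the vectors $\mathbb 1_N$ and $\bar{\mathbb 1}_N$ in \eqref{def_LN}. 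So the only remaining work is to express $\int_0^1 \chi(x)\mathcal{L}_k'(x)\,dx$ back in terms of the projections $\Chi_0,\dots,\Chi_N$.

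The key step is therefore the expansion of the derivative $\mathcal{L}_k'$ in the Legendre basis. I would invoke the classical identity $\mathcal{L}_k'(x) = \sum_{j=0}^{k-1}(2j+1)\bigl(1-(-1)^{j+k}\bigr)\mathcal{L}_j(x)$ for the shifted Legendre polynomials on $[0,1]$ (equivalently, $\mathcal{L}_k' $ is a combination of only the lower-index $\mathcal{L}_j$ with $j+k$ odd, with coefficient $2(2j+1)$ in that case and $0$ otherwise). This can be derived from the recurrence relations for Legendre polynomials, or simply cited from \cite{courant1966courant}; it is exactly the statement that the coefficient is $\ell_{k,j}=(2j+1)(1-(-1)^{j+k})$ for $j\le k$ (and note $\ell_{k,k}=0$, so the triangular matrix $L_N$ is strictly lower triangular, consistent with $\deg\mathcal{L}_k' < \deg\mathcal{L}_k$). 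Plugging this expansion in and using orthogonality, $\int_0^1\chi(x)\mathcal{L}_k'(x)\,dx = \sum_{j=0}^{k}\ell_{k,j}\int_0^1\chi(x)\mathcal{L}_j(x)\,dx = \sum_{j=0}^{k}\ell_{k,j}\Chi_j$.

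Finally I would stack these identities for $k=0,\dots,N$ into vector form. The boundary terms assemble into $c\,\mathbb 1_N\chi(1) - c\,\bar{\mathbb 1}_N\chi(0)$ because the $k$-th block contributes $c\,\mathcal{L}_k(1)\chi(1)=c\chi(1)$ and $-c\,\mathcal{L}_k(0)\chi(0)=-c(-1)^k\chi(0)$; the derivative terms assemble into $-cL_N[\Chi_0^\top\cdots\Chi_N^\top]^\top$ by definition of $L_N$ in \eqref{def_LN}. This yields exactly the claimed expression.

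The routine calculus (integration by parts, stacking) is straightforward; the one place requiring care — and the only "real" ingredient — is establishing the derivative expansion formula with the precise coefficients $\ell_{k,j}$, i.e. verifying the parity pattern $(1-(-1)^{j+k})$ and the factor $(2j+1)$. The cleanest route is induction using the three-term recurrence and the known relation $\mathcal{L}_{k+1}'-\mathcal{L}_{k-1}' = (2k+1)\mathcal{L}_k$ (suitably rescaled for the shifted polynomials on $[0,1]$), from which the coefficient pattern follows by telescoping; alternatively one may differentiate the explicit hypergeometric-type sum defining $\mathcal{L}_k$ given in the excerpt and resum. I would present the recurrence argument, as it avoids heavy binomial manipulation.
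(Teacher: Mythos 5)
Your proposal follows essentially the same route as the paper's appendix proof: differentiate $\Chi_k$ using the transport dynamics \eqref{eq:sys2_2}, integrate by parts, insert the boundary values $\mathcal{L}_k(1)=1$, $\mathcal{L}_k(0)=(-1)^k$ and the derivative expansion $\mathcal{L}_k'=\sum_{j=0}^{k-1}\ell_{k,j}\mathcal{L}_j$, then stack over $k=0,\dots,N$. The argument is correct (note only that expressing $\int_0^1\chi\,\mathcal{L}_k'\,dx$ in terms of the $\Chi_j$ needs mere linearity, not orthogonality), so no gap to report.
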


\begin{proof} The proof of this lemma is presented in appendix because of its technical nature. \end{proof}
\vspace{-0.7cm}
\subsection{Main result}

Taking advantage of the previous lemmas, the following extension to Theorem \ref{th0} is stated:
\begin{theo} \label{sec:theoN}
	Consider system \eqref{eq:problem} with a given speed $c > 0$, a viscous damping $c_0~>~0$ and initial conditions $(X^0, u^0, v^0) \in \mathcal D(T)$. 
	 Assume that, for a given integer $N \in \mathbb N$, there exist $P_N~\in~\S^{n+2(N+1)}_+$ and $S, R \in \mathbb{S}_+^2$ such that inequality 
	 \vspace{-0.1cm}
	\begin{multline} \label{eq:psiN}
		\Psi_N = \text{He} \left( Z_N^{\top} P_N F_N \right) - c\tilde{R}_N\\
		 + c \left( H_{N}^{\top} (S+R) H_{N} - G_{N}^{\top} S G_{N}\right)\prec 0
	\end{multline}
	 \vspace{-0.05cm}
	\!\!holds, where
	\begin{equation} \label{eq:defTheo2}
		\begin{array}{ll}
			\!\!\!\!\!\! F_N \!\!\!\! &= \left[ \begin{matrix} I_{n+2(N+1)} & 0_{n+2(N+1), 2} \end{matrix} \right], Z_N = \left[ \begin{matrix} \mathcal{N}_N^{\top} & c\mathcal Z_{N}^{\top} \end{matrix} \right]^{\!\top}\!\!\!, \\  
			\!\!\!\!\!\! \mathcal{N}_N \!\!\!\! &= \left[ \begin{matrix} A+BK&\tilde B & 0_{n, 2(N+1)} \end{matrix} \right], \\
			\!\!\!\!\!\! \mathcal Z_{N} \!\!\!\! &= \mathbb 1_N H_{N}\! -\! \bar{\mathbb 1}_N G_{N}\! -\! \left[\begin{matrix}  0_{2N\!+\!2,n} &\!\!\!\! L_{N} &\!\!\!\! 0_{2N\!+\!2,2}\end{matrix} \right], \\
			\!\!\!\!\!\! G_{N} \!\!\!\! &= \left[ \begin{matrix} 0_{2,n+2(N+1)} & g \end{matrix} \right] + \left[ \begin{smallmatrix} K \\ 0_{1, n} \end{smallmatrix} \right] \mathcal{N}_N, \\
			\!\!\!\!\!\! H_{N} \!\!\!\! &= \left[ \begin{matrix} 0_{2, n+2(N+1)} & h \end{matrix} \right] + \left[ \begin{smallmatrix} 0_{1, n} \\ K \end{smallmatrix} \right] \mathcal{N}_N, \\
			\!\!\!\!\!\! \tilde{R}_N \!\!\!\! &= \text{diag}\left( 0_n, R,3R, \cdots, (2N+1) R , 0_2 \right),
		\end{array}
	\end{equation}
and where matrices $L_N$, $\mathbb {1}_N$ and $\bar{\mathbb {1}}_N$ are given in \eqref{def_LN}. 

Then, the coupled infinite dimensional system \eqref{eq:problem} is exponentially stable in the sense of norm $\|\cdot \|^2_{\mathcal H}$ and there exist $\gamma>1$ and $\delta>0$ such that energy estimate \eqref{eq:energyDecay} holds.
\end{theo}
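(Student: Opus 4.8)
The plan is to mirror the proof of Theorem~\ref{th0}, replacing the scalar extended system \eqref{eq:extendedSystem} by its Legendre-augmented counterpart and using Lemmas~\ref{Bess} and~\ref{lem:Chi_k} in place of Jensen's inequality and the elementary differentiation rule $\dot{\Chi}_0 = c(\chi(1)-\chi(0))$. Concretely, I would establish that the LMI \eqref{eq:psiN} implies the existence of positive scalars $\varepsilon_1,\varepsilon_2,\varepsilon_3$ such that the Lyapunov functional $V_N$ of \eqref{eq:VN} satisfies the sandwich and dissipation inequalities \eqref{eq:expo} with $V_0$ replaced by $V_N$; exponential stability in $\|\cdot\|_{\mathcal H}$ then follows verbatim from the Conclusion step of Theorem~\ref{th0} (Gr\"onwall's inequality on $\dot V_N + \tfrac{\varepsilon_3}{\varepsilon_2}V_N \leq 0$, together with Propositions~\ref{sec:existence} and~\ref{sec:propEquilibrium} for well-posedness, which is again extracted from the $(1,1)$-block $e_1^\top\Psi_N e_1\prec0$ yielding $\text{He}((A+BK)^\top Q)\prec0$ and hence $A+BK$ nonsingular).

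For the lower bound (existence of $\varepsilon_1$): from $P_N\succ0$ and $S\succeq S \succeq \varepsilon_1\frac{2+c^2}{2c^2}I_2$ (and $S+xR\succeq S$ since $R\succ0$), one gets $V_N \supeq \varepsilon_1(|X|_n^2 + |KX|^2 + \frac{2+c^2}{2c^2}\|\chi\|^2)$ exactly as before, because the extra coordinates $\Chi_1,\dots,\Chi_N$ only add a nonnegative quadratic form; then \eqref{eq:norm_chi_u} and Lemma~\ref{sec:normU} (with $u(0)=KX$) upgrade this to $V_N\supeq\varepsilon_1\|(X,u,u_t)\|_{\mathcal H}^2$. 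For the upper bound (existence of $\varepsilon_2$): choose $\varepsilon_2$ with $P_N\preceq\text{diag}(\varepsilon_2 I_n,\tfrac{\varepsilon_2}{4(N+1)}I_{2(N+1)})$ and $S+R\preceq\tfrac{\varepsilon_2}{4}I_2$; then $V_N\infeq\varepsilon_2(|X|_n^2+\tfrac1{4(N+1)}\sum_{k=0}^N\Chi_k^\top\Chi_k+\tfrac14\|\chi\|^2)$, and since $\sum_{k=0}^N\Chi_k^\top\Chi_k \infeq \sum_{k=0}^N(2k+1)\Chi_k^\top\Chi_k\infeq\|\chi\|^2$ by Lemma~\ref{Bess} (with $R=I_2$), this collapses to $V_N\infeq\varepsilon_2(|X|_n^2+\tfrac12\|\chi\|^2)$, hence $\infeq\varepsilon_2\|(X,u,u_t)\|_{\mathcal H}^2$ via \eqref{eq:norm_chi_u}.

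The core computation is the derivative. Differentiating \eqref{eq:VN} gives $\dot V_N = \text{He}(\dot X_N^\top P_N X_N) + \dot\V(u)$, where $\dot\V$ is still given by the integration-by-parts formula \eqref{eq:Vcaldot}. Introduce the extended vector $\xi_N = [X^\top\ \Chi_0^\top\ \cdots\ \Chi_N^\top\ u_t(1)\ cu_x(0)]^\top$. As in the scalar case, $\chi(1)=H_N\xi_N$, $\chi(0)=G_N\xi_N$, $\dot X=\mathcal N_N\xi_N$, $X_N=F_N\xi_N$, while Lemma~\ref{lem:Chi_k} gives $[\dot\Chi_0^\top\cdots\dot\Chi_N^\top]^\top = c(\mathbb 1_N H_N - \bar{\mathbb 1}_N G_N - L_N[\,\cdots\,])\xi_N$, so $\dot X_N = Z_N\xi_N$ with $Z_N$ as in \eqref{eq:defTheo2}. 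Substituting yields $\dot V_N = \xi_N^\top(\text{He}(Z_N^\top P_N F_N) + cH_N^\top(S+R)H_N - cG_N^\top S G_N)\xi_N - c\int_0^1\chi^\top R\chi\,dx$, and adding and subtracting $c\tilde R_N$ rewrites this as $\dot V_N = \xi_N^\top\Psi_N\xi_N + c\sum_{k=0}^N(2k+1)\Chi_k^\top R\Chi_k - c\int_0^1\chi^\top R\chi\,dx$. Lemma~\ref{Bess} makes the last two terms $\leq 0$. Then, just as in the proof of Theorem~\ref{th0}, pick $\varepsilon_3>0$ small enough that $R\succeq\frac{\varepsilon_3}{2c}\frac{2+c^2}{c^2}I_2$ and $\Psi_N\preceq-\varepsilon_3\,\text{diag}(I_n+2K^\top K,\ \frac{2+c^2}{2c^2}I_2,\ 0_{2N},\ 0_2)$ (both possible since $R\succ0$, $\Psi_N\prec0$), absorb the residual $\frac{\varepsilon_3}{2c}\frac{2+c^2}{c^2}$ terms using Lemma~\ref{Bess} once more, use $u(0)=KX$, \eqref{eq:norm_chi_u} and Lemma~\ref{sec:normU} exactly as in the $\varepsilon_3$-step of Theorem~\ref{th0}, to conclude $\dot V_N\infeq-\varepsilon_3\|(X,u,u_t)\|_{\mathcal H}^2$.

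The only genuinely new obstacle is bookkeeping: verifying that the matrices $Z_N,\mathcal Z_N,L_N,\mathbb 1_N,\bar{\mathbb 1}_N$ are assembled so that $\dot X_N=Z_N\xi_N$ holds with the right block alignment — this is where Lemma~\ref{lem:Chi_k} does the heavy lifting and where the coefficients $\ell_{k,j}$ must be threaded through correctly. Everything else (the three $\varepsilon_i$ and the Gr\"onwall conclusion) is a transcription of Theorem~\ref{th0} with Jensen replaced by the Bessel inequality of Lemma~\ref{Bess}. I would therefore present the proof tersely, stating only the two points of departure from Theorem~\ref{th0}: the substitution $\dot X_N = Z_N\xi_N$ justified by Lemma~\ref{lem:Chi_k}, and the estimate $c\sum_{k=0}^N(2k+1)\Chi_k^\top R\Chi_k - c\int_0^1\chi^\top R\chi\,dx\infeq0$ justified by Lemma~\ref{Bess}, then invoke ``the same arguments as in the proof of Theorem~\ref{th0}'' for the remainder.
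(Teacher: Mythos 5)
Your proposal reproduces the paper's proof essentially step for step: well-posedness from the $(1,1)$ block of $\Psi_N$, the $\varepsilon_1$ bound as in Theorem~\ref{th0}, the $\varepsilon_2$ bound via Lemma~\ref{Bess} (your weights $\tfrac{\varepsilon_2}{4(N+1)}$ instead of the paper's $\tfrac{\varepsilon_2}{4}(2k+1)$ are an immaterial variant), and the derivative identity \eqref{eq:Vdot23} obtained from Lemma~\ref{lem:Chi_k}. The one step that fails as written is your choice of diagonal upper bound in the $\varepsilon_3$ step: you take $\Psi_N \preceq -\varepsilon_3\,\diag\bigl(I_n+2K^\top K,\ \tfrac{2+c^2}{2c^2}I_2,\ 0_{2N},\ 0_2\bigr)$, i.e.\ zero weight on the blocks of $\Chi_1,\dots,\Chi_N$, and then claim the residuals are ``absorbed by Bessel once more.'' They are not. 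After splitting $R=(R-\tfrac{\varepsilon_3}{2c}\tfrac{2+c^2}{c^2}I_2)+\tfrac{\varepsilon_3}{2c}\tfrac{2+c^2}{c^2}I_2$ and applying Lemma~\ref{Bess} with the shifted matrix, the uncancelled term is $+\varepsilon_3\tfrac{2+c^2}{2c^2}\sum_{k=1}^N(2k+1)|\Chi_k|^2$, which is positive and, with your bound, has no negative counterpart; bounding it by Bessel as $\varepsilon_3\tfrac{2+c^2}{2c^2}\bigl(\|\chi\|^2-|\Chi_0|^2\bigr)$ consumes the entire $-\|\chi\|^2$ margin and leaves only $\dot V_N\infeq-\varepsilon_3\bigl(|X|_n^2+2|u(0)|^2+\tfrac{2+c^2}{2c^2}|\Chi_0|^2\bigr)$, which does not dominate $\|(X,u,u_t)\|^2_{\mathcal H}$. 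Concretely, for $\chi=\mathcal{L}_1 v$ (so $\Chi_0=0$, $\Chi_1=\tfrac13 v$) the quadratic-form inequality your chain requires reduces to $0\infeq-\varepsilon_3\tfrac{2+c^2}{2c^2}\tfrac13|v|^2$, which is false.

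The fix is exactly the paper's condition \eqref{eq:Rpos2}: choose $\Psi_N\preceq-\varepsilon_3\,\diag\bigl(I_n+K^\top K,\ \tfrac{2+c^2}{2c^2}\diag\{I_2,3I_2,\dots,(2N+1)I_2\},\ 0_2\bigr)$, which is achievable for $\varepsilon_3$ small enough since $\Psi_N\prec0$; the $(2k+1)$-weighted blocks then cancel the residuals $\tfrac{\varepsilon_3}{2}\tfrac{2+c^2}{c^2}(2k+1)|\Chi_k|^2$ for every $k$, Bessel with $R-\tfrac{\varepsilon_3}{2c}\tfrac{2+c^2}{c^2}I_2$ disposes of the remaining difference, and the rest of your argument (boundary condition $u(0)=KX$, identity \eqref{eq:norm_chi_u}, Lemma~\ref{sec:normU}, Gr\"onwall) goes through verbatim as in the paper.
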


\begin{remark} Remark \ref{rem:S} also applies for this theorem and it means that $c_0$ must be strictly positive. In other words, these theorems cannot ensure the stability of the interconnection if the PDE is undamped. \\
Also note that Theorem \ref{sec:theoN} with $N = 0$ leads exactly to the same conditions as presented in Theorem \ref{th0}.
\end{remark}
\begin{remark}
This methodology introduces a hierarchy in the stability conditions inspired from what one can read in \cite{seuret:hal-01065142} in the case of time-delay systems. More precisely, the sets
\[
	\mathcal{C}_{N}=\left\{c>0\mbox{ s.t. } \exists P_N\in \mathbb S^{n+2(N+1)}_+,S,R\in \mathbb S^{2}_+, \Psi_N\prec0 \right\}
\]
representing the parameters $c$ for which the LMI of Theorem~2 is feasible for a given system \eqref{eq:problem} and for a given $N\in \mathbb N$, satisfy the following inclusion: $\mathcal{C}_N \subseteq \mathcal{C}_{N+1}$. In other words, if there exists a solution to Theorem \ref{sec:theoN} at an order $N_0$, then there also exists a solution at any order $N\geq N_0$. The proof is very similar to the one given in \cite{seuret:hal-01065142}. We can proceed by induction with $P_{N+1} = \left[ \begin{smallmatrix} P_N & 0 \\ 0 & \varepsilon I_2 \end{smallmatrix} \right]$ and a sufficiently small $\varepsilon > 0$. Then, $\Psi_{N} \prec 0 \Rightarrow \Psi_{N+1} \prec 0$. The calculations are tedious and technical and we do not intend to give them in this article. 
\end{remark}
\vspace{-0.3cm}
\subsection{Proof of Theorem 2}
\vspace{-0.1cm}
	The proof of dissipativity follows the same line as in Theorem~\ref{th0} and consists in proving the existence of positive scalars $\varepsilon_1, \varepsilon_2$ and $\varepsilon_3$ such that the functional $V_N$ verifies the inequalities given in \eqref{eq:expo}. 
\subsubsection{Well-posedness} Using a similar reasoning to Theorem~\ref{th0}, a necessary condition for LMI \eqref{eq:psiN} to be verified is that $A+BK$ is non singular. Then, according to Propositions~\ref{sec:existence}~and~\ref{sec:propEquilibrium}, the problem is well-posed and $0_{\mathcal{H}}$ is the unique equilibrium.

\subsubsection{Existence of $\varepsilon_1$} 

	It strictly follows the proof in Theorem~\ref{th0} and is therefore omitted.

\subsubsection{Existence of $\varepsilon_2$} 
	Since $P_N, S$ and $R$ are definite positive matrices, there exists $\varepsilon_2 > 0$ such that:
	\vspace{-0.1cm}
	\[
		\begin{array}{rcl}
			P_N & \preceq & \text{diag} \left( \varepsilon_2 I_n, \frac{\varepsilon_2}{4} \text{diag} \left\{ (2k+1) I_n \right\}_{k \in (0, N)} \right), \\
			(S+xR) & \preceq & S + R \ \preceq \ \frac{\varepsilon_2}{4} I_2,\quad \forall x\in (0,1).
		\end{array}
	\]
	Then, from equation \eqref{eq:VN}, we get:
	\vspace{-0.1cm}
	\begin{equation*}
		\begin{array}{lll}
			V_N(X_N, u) 
				&\!\!\!\! \infeq &\!\!\!\! \displaystyle\varepsilon_2 |X|_n^2  \vphantom{\sum_{k=0}^{N}} \! +\! \frac{\varepsilon_2}{4}\! \left( \sum_{k=0}^{N} (2k\!+\!1) \Chi_k^{\top} \Chi_k + \| \chi \|^2 \right) \\
&\!\!\!\! \infeq &\!\!\!\! \varepsilon_2 \left( |X|_n^2\! +\! \frac{1}{2} \|\chi\|^2 \right).
		\end{array}
	\end{equation*}
	
While the first inequality is guaranteed by the constraint $(S+xR) \preceq \frac{\varepsilon_2}{4} I_2$, for all $x\in (0,1),$ the second estimate results from the application of Bessel inequality \eqref{eq:Bessel}. Therefore, following the same procedure as in the proof of Theorem~\ref{th0} after equation \eqref{ineqth0:eps2}, there indeed exists $\varepsilon_2 > 0$ such that $V_N(X_N,u) \infeq \varepsilon_2  \| (X, u, u_t) \|^2_{\mathcal H}$.

\subsubsection{Existence of $\varepsilon_3$} 

	Differentiating in time $V_N$ defined in \eqref{eq:VN} along the trajectories of system \eqref{eq:problem} leads to:
	\begin{equation*}
		\dot{V}_N(X_N, u) = \text{He} \left( \left[ \begin{smallmatrix} \dot{X} \\  \dot{\Chi}_0 \\ \vdots \\ \dot{\Chi}_N \end{smallmatrix} \right]^{\top} P_N \left[ \begin{smallmatrix} {X} \\  {\Chi}_0 \\ \vdots \\ {\Chi}_N\end{smallmatrix} \right]  \right) + \dot{\V}(u).
	\end{equation*}

	The goal here is to find an upper bound of $\dot{V}_N$ using the following extended state: $\xi_N = \left[ \begin{matrix} X_N^{\top} & u_t(1) & c u_x(0) \end{matrix} \right]^{\top}$.
	Using equation \eqref{eq:Vcaldot} and Lemma \ref{lem:Chi_k}, we note that $X_N = F_N\xi_N, ~ \dot X_N = Z_N\xi_N, ~ \chi(1) = H_{N}\xi_N, ~ \chi(0) = G_{N}\xi_N$ where matrices $F_N, Z_N, H_{N}, G_{N}$ are given in \eqref{eq:defTheo2}. Then we can write:
	\vspace{-0.5cm}
	\begin{multline}
		\dot{V}_N(X_N, u) = \xi_N^{\top} \Psi_N \xi_N + c \sum_{k=0}^{N} \Chi_k^{\top} (2k+1) R \Chi_k  \\
		-c  \int_0^1 \chi^{\top}(x) R \chi(x) dx.
		\label{eq:Vdot23}
	\end{multline}
	
Since  $R \succ 0$ and $\Psi_N \prec 0$, there exists $\varepsilon_3 > 0$ such that:
	\begin{equation} \label{eq:Rpos2} 
		\begin{array}{lcl}
			R &\succeq & \ \frac{\varepsilon_3}{2c} \frac{2+c^2}{c^2} I_2,\\
			\Psi_N &\preceq & 
							-\varepsilon_3 \text{diag} \left(  I_n + K^{\top} K, \right. \\
&&\ \left. \frac{2+c^2}{2c^2} \text{diag} \{ I_2,3I_2,\dots,(2N\!+\!1) I_2 \}, 0_2 \right)\!.
		\end{array}
	\end{equation}
Using \eqref{eq:Rpos2} and Bessel's inequality, equation \eqref{eq:Vdot23} becomes:
	\begin{multline*}
		\dot{V}_N(X_N,u) \infeq - \varepsilon_3 \left( |X|^2_n + 2|u(0)|^2 + \frac{2+c^2}{2c^2} \| \chi \|^2 \right),
	\end{multline*}
which is comparable to equation \eqref{ineqth0:eps3}. Therefore, similarly, we obtain $\dot{V}_N(X_N,u) \infeq - \varepsilon_3 \ \| (X, u, u_t) \|^2_{\mathcal H}$.

\subsubsection{Conclusion}
There exist $\varepsilon_1, \varepsilon_2$ and $\varepsilon_3$ positive reals such that inequalities \eqref{eq:expo} are satisfied and the exponential stability of system \eqref{eq:problem} is therefore guaranteed.

\vspace{-0.3cm}
\section{Examples}

Three examples of stability for system \eqref{eq:problem} are provided here. In each case, $A+BK$ is non singular and therefore, there is a unique equilibrium. The solver used for the LMIs is ``sedumi'' with the YALMIP toolbox \cite{1393890}. The dashed curve denoted ``Freq'' is obtained using a frequency analysis and displays the exact stability area. This exact method is explained in \cite{barreauInputOutput} and does not use Lyapunov arguments.
\vspace{-0.5cm}
\subsection{Problem \eqref{eq:problem} with $A$ and $A+BK$ Hurwitz}
In this first numerical example, the considered system is defined as follows:
\vspace{-0.1cm}
\begin{equation} \label{eq:simu1}
	\begin{array}{ccc}
		A = \left[ \begin{smallmatrix} -2 & 1 \\ 0 & -1 \end{smallmatrix} \right], &
		B = \left[ \begin{smallmatrix} 1 \\ 1 \end{smallmatrix} \right], &
		K = \left[ \begin{smallmatrix} 0 & -2 \end{smallmatrix} \right].
	\end{array}
\end{equation}
Matrices $A$ and $A+BK$ are Hurwitz. The ODE and the PDE are then stable if they are not coupled. As shown in Figure \ref{fig:simu1}, the frequency argument shows that there exists a minimum wave speed called here $c_{min}$ which is function of the damping $c_0$ for system \eqref{eq:problem} to be stable.

\begin{figure}
	\centering
	\subfloat[System \eqref{eq:simu1}: $A$ and $A+BK$ Hurwitz]{\includegraphics[width=8.3cm]{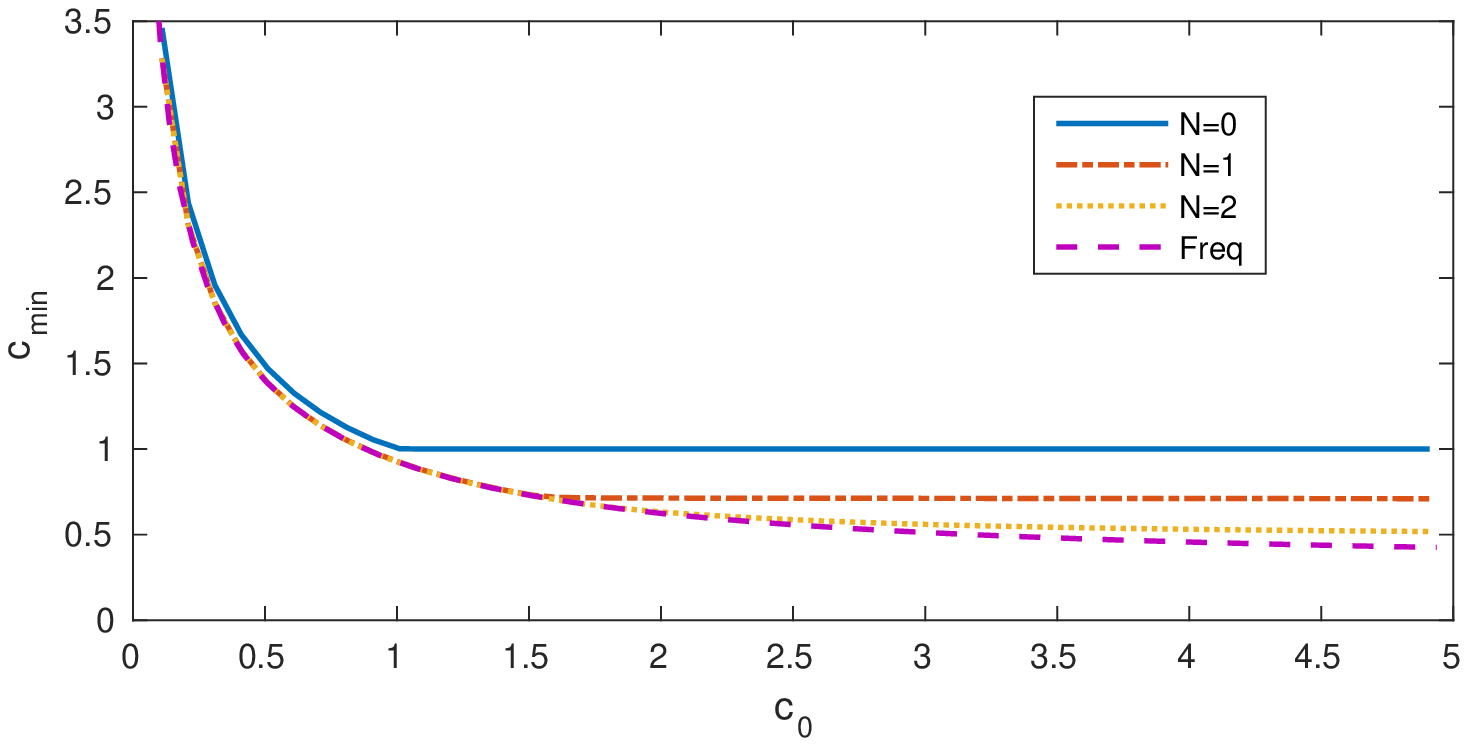} \label{fig:simu1}}\\
	\vspace{-0.4cm}
	\subfloat[System \eqref{eq:simu2}: $A$ not Hurwitz and $A+BK$ Hurwitz]{\includegraphics[width=8.3cm]{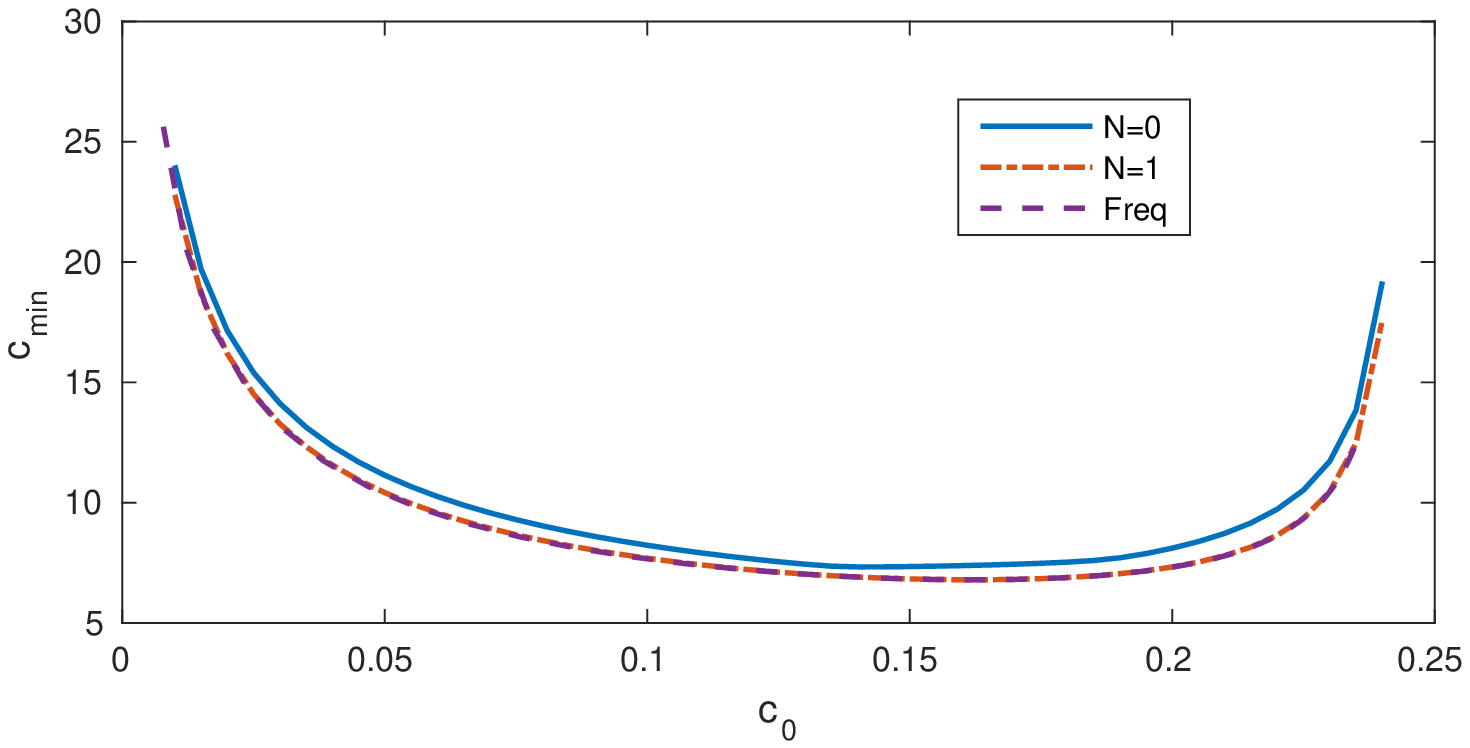} \label{fig:simu2}}\\
	\vspace{-0.4cm}
	\subfloat[System \eqref{eq:simu3}: $A$ and $A+BK$ not Hurwitz]{\includegraphics[width=8.3cm]{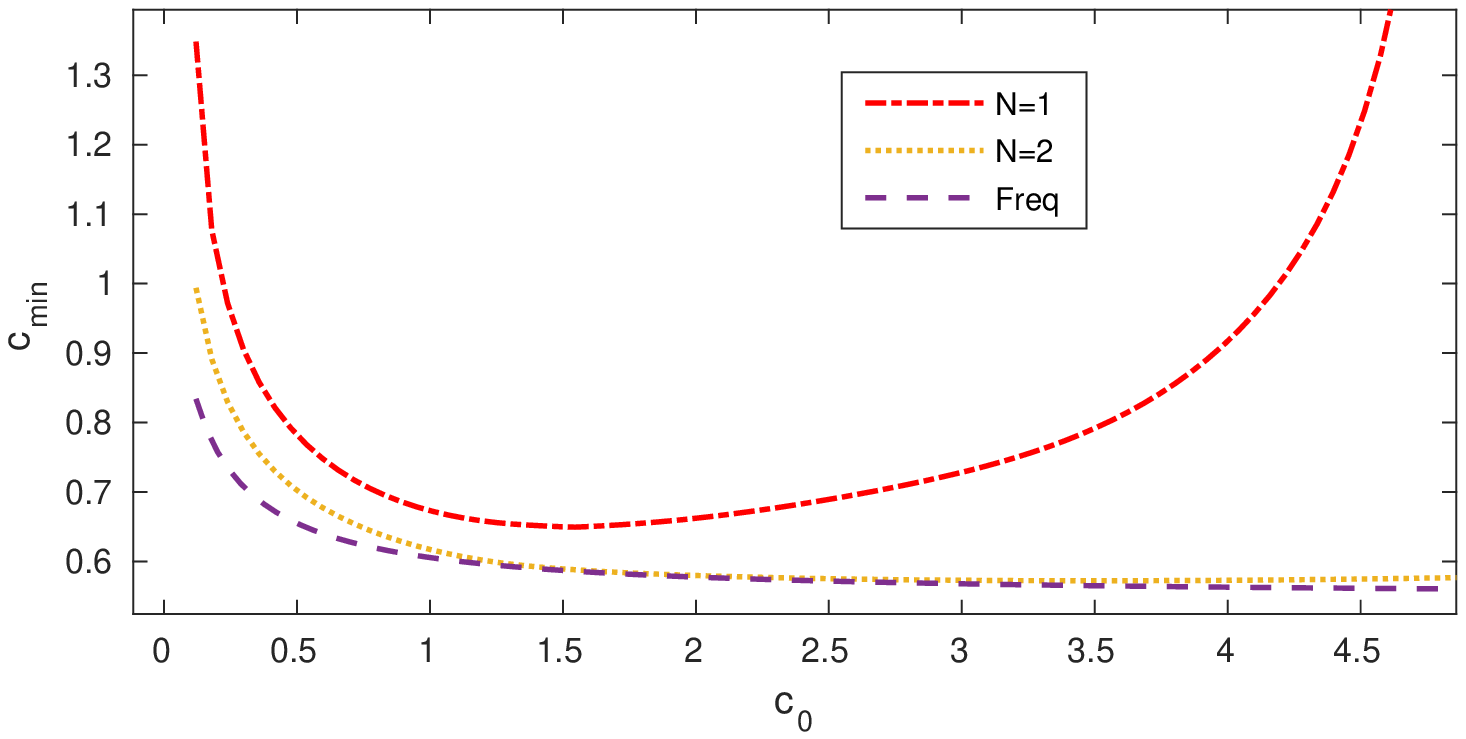} \label{fig:simu3}}
	\vspace{-0.1cm}
	\caption{Minimum wave speed $c_{min}$ as a function of $c_0$ for system \eqref{eq:problem} to be stable. The values for $A$, $B$ and $K$ are given by equations \eqref{eq:simu1}, \eqref{eq:simu2} or \eqref{eq:simu3}.}
\end{figure}

The phenomenon induced by the coupling can be understood as the robustness of the ODE to a disturbance generated by a wave equation. 
Intuitively, if the wave speed is large enough, the perturbation tends to $0$ fast enough for the ODE to keep its stability behavior. Another interesting thing to notice is the decrease of $c_{min}$ as $N$ increases (hierarchy of the stability criteria with respect to the order $N$). For this example, as $N$ increases, the stability area is converging to the exact one.
\vspace{-0.4cm}
\subsection{Problem \eqref{eq:problem} with $A+BK$ Hurwitz and $A$ not Hurwitz.}

Let us consider here, system \eqref{eq:problem} described by the following matrices:
\vspace{-0.3cm}
\begin{equation} \label{eq:simu2}
	\begin{array}{ccc}
		A = \left[ \begin{smallmatrix} 2 & 1 \\ 0 & 1 \end{smallmatrix} \right], &
		B = \left[ \begin{smallmatrix} 1 \\ 1 \end{smallmatrix} \right], &
		K = \left[ \begin{smallmatrix} -10 & 2 \end{smallmatrix} \right].
	\end{array}
\end{equation}

As $A$ is not Hurwitz, we are studying the stabilization of the ODE through a communication medium modeled by the wave equation. For the same reason as before, the wave speed must be large enough for the control to be not too much delayed but also with a moderated damping to transfer the state variable $X$ through the PDE equation. Then, a $c_{0max}$ is appears as one can see in Figure \ref{fig:simu2}.


Some numerical simulations have been performed on this example. 
Figure \ref{fig:simu2} shows that for system \eqref{eq:simu2}  with $c_0 = 0.15$, the minimum wave speed is $c_{min} = 6.83$. The numerical stability can also be seen in Figure \ref{fig:3sim} and indeed, the system is at the boundary of the stable area in Figure \ref{fig:3sim2} and unstable for smaller values of $c$. The results coming from the exact criterion and Theorem 2 are close even for small $N$. That means the stability area provided with $N = 1$ is a good estimation of the maximum stability set.


\begin{figure*}
	\centering
	\subfloat[$c = 10$]{\includegraphics[width=6cm]{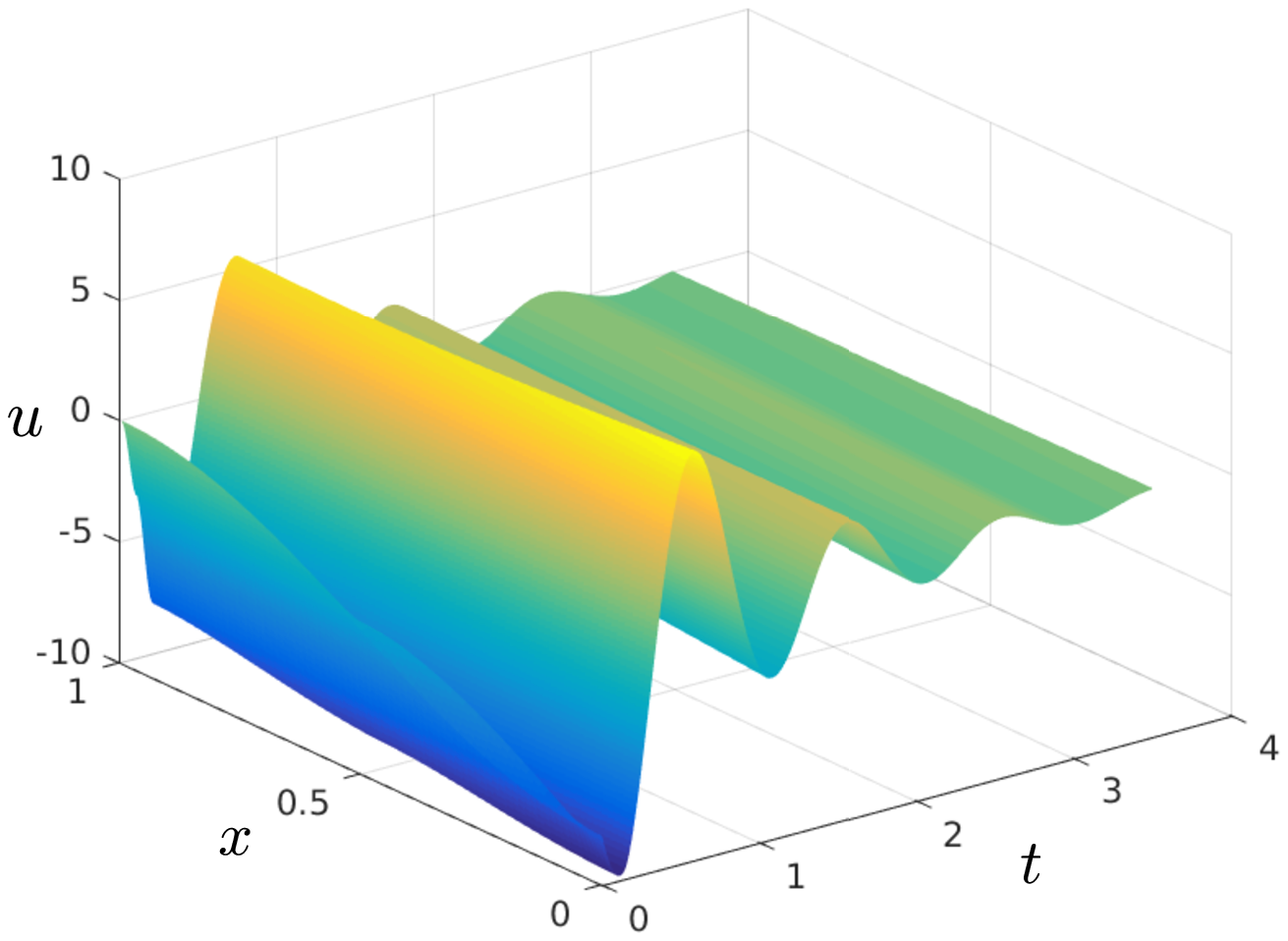}\label{fig:3sim1}}
	\subfloat[$c = 6.83$]{\includegraphics[width=6cm]{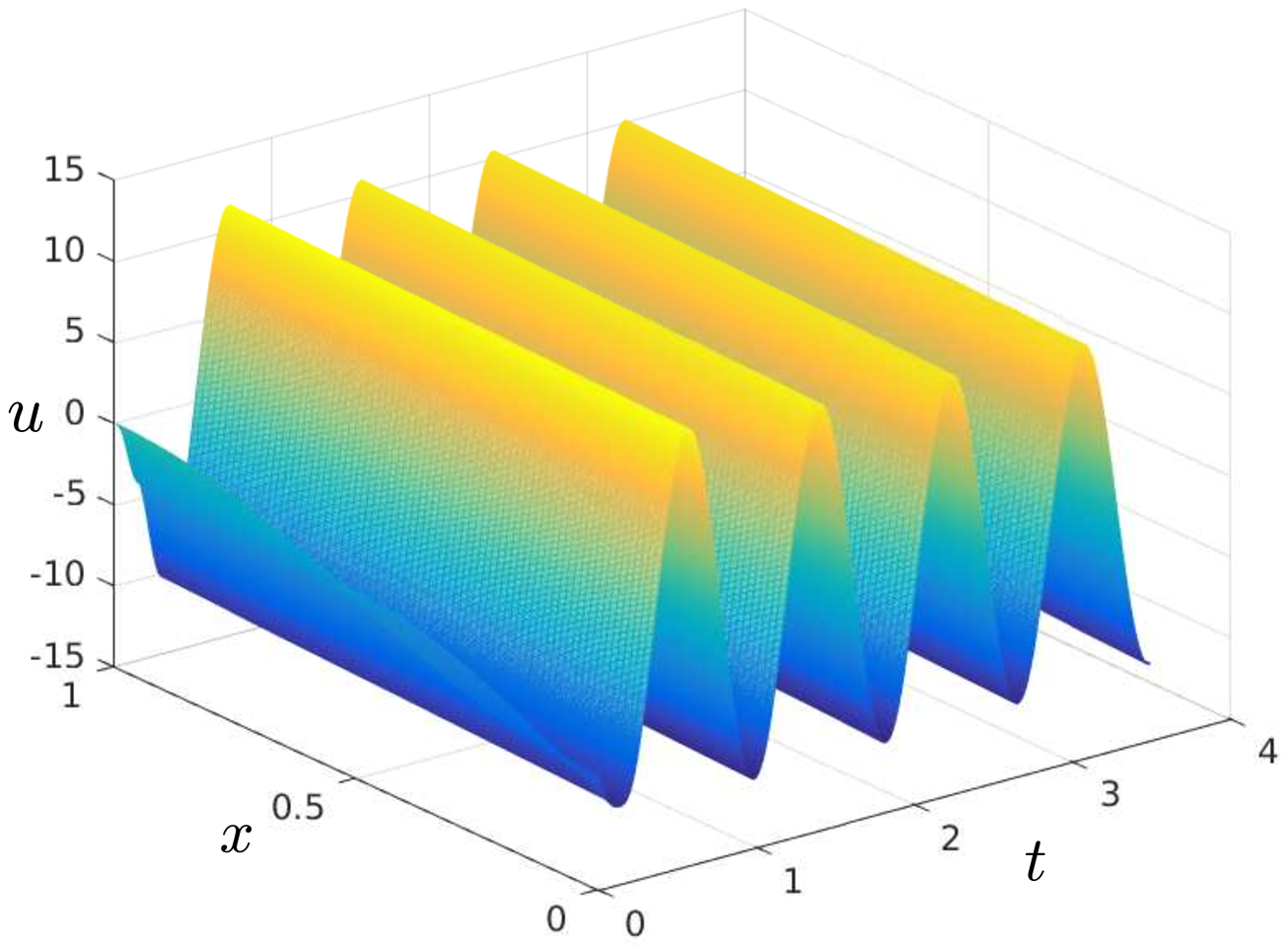}\label{fig:3sim2}}
	\subfloat[$c = 6.5$]{\includegraphics[width=6cm]{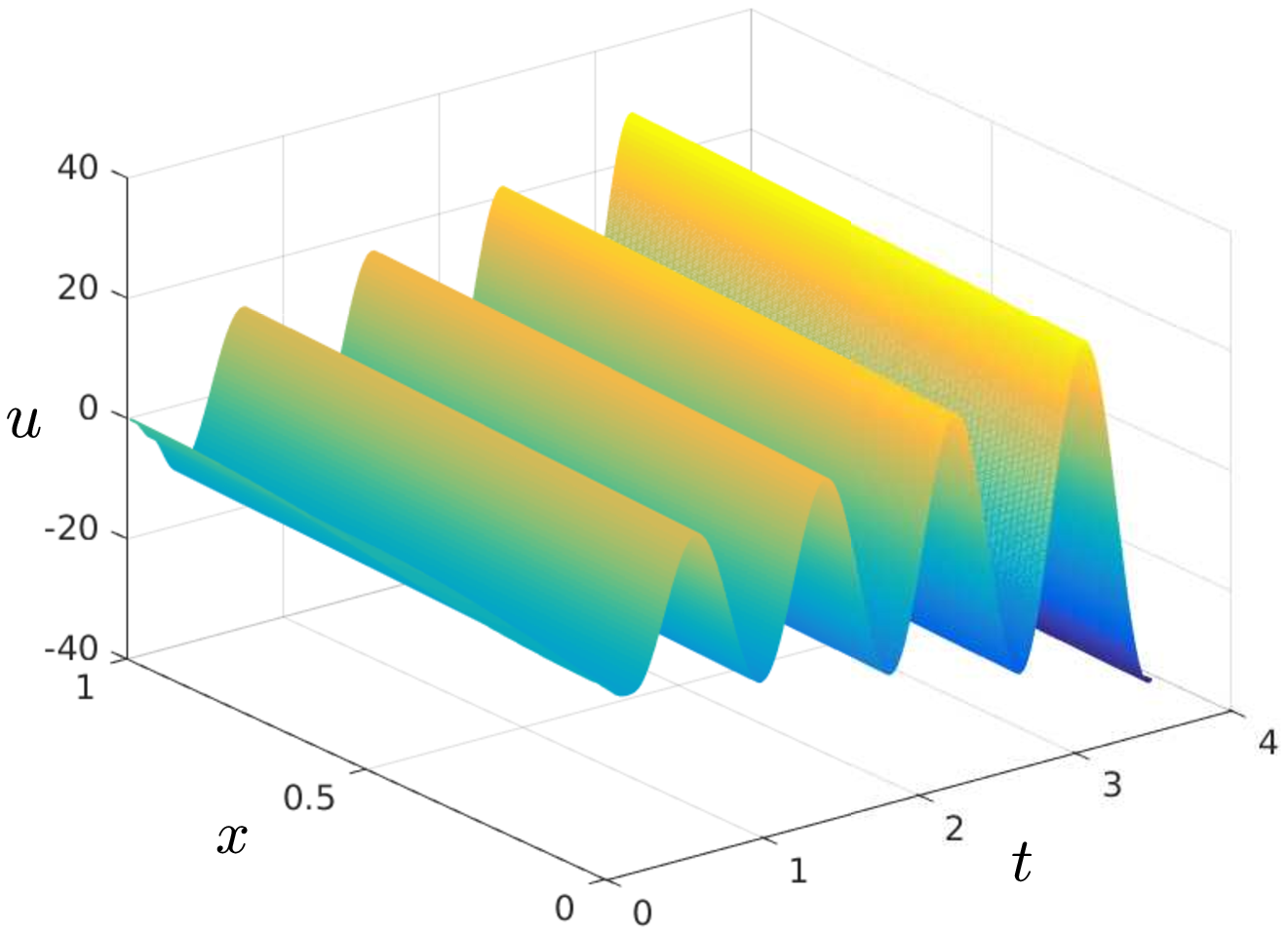}\label{fig:3sim3}}
	\vspace{-0.25cm}
	\caption{Chart of $u$ for system \eqref{eq:simu2} with the parameters: $u^0(x) = \left( \cos(\pi x)+1 \right) \frac{K X_0}{2}$, $X_0 = \left[ 1 \ 1 \right]^{\top}$, $v^0(x) = 0$ and $c_0 = 0.15$ for 3 values of $c$. These results are obtained using Euler forward as a numerical scheme.}
	\label{fig:3sim}
\end{figure*}

\vspace{-0.3cm}
\subsection{Problem \eqref{eq:problem} with $A$ and $A+BK$ not Hurwitz.}

Consider an open loop unstable system defined by:
\vspace{-0.1cm}
\begin{equation} \label{eq:simu3}
	\begin{array}{ccc}
		A = \left[ \begin{smallmatrix} 0 & 1 \\ -2 & 0.1 \end{smallmatrix} \right], &
		B = \left[ \begin{smallmatrix} 0 \\ 1 \end{smallmatrix} \right], &
		K = \left[ \begin{smallmatrix} 1 & 0 \end{smallmatrix} \right].
	\end{array}
\end{equation}

Gain $K$ has been chosen such that the closed loop is also unstable. Surprisingly, the proposed methodology shows the stability for some pairs $(c, c_0)$. The results are presented in Figure \ref{fig:simu3}. The LMIs are not feasible for Theorem 2 with $N = 0$. For $N \supeq 1$, there is a stability area and the slope of the right asymptotic branch is decreasing at each order. Hence, it appears that the introduction of the string equation in the feedback loop helps the stabilization of the closed loop system. For $N = 1$, the stability area is quite far from the maximum one but this gap reduces significantly for higher orders.

\vspace{-0.2cm}
\section{Conclusion}
\vspace{-0.1cm}

%

A hierarchy of stability criteria has been provided for the stability of systems described by the interconnection between a finite dimensional linear system and an infinite dimensional system modeled by a string equation. The proposed methodology relies on an extensive use of Bessel's inequality, which allows to design new and accurate Lyapunov functionals. This new methodology encompasses the classical notion of energy proposed in that case. In particular, the stability of the closed-loop or open-loop system is not a requirement anymore. Future works will include the study of robustness of this approach and the design of a controller.

\vspace{-0.5cm}
\appendix
\subsection{Proof of Lemma \ref{lem:Chi_k}}
\label{sec:Xkprim}

For a given integer $k$ in $\mathbb N$, differentiating of $\Chi_k$ along the trajectories of \eqref{eq:sys2_2}  yields
$\dot \Chi_k 
=c \int_0^1 \chi_x(x) \mathcal{L}_k(x) dx$.
Then, integrating by parts, we get
\begin{equation}\label{diff:Chi_k}
	\begin{array}{lcl}
		\dot\Chi_k & =& c \left( \left[ \chi(x) \mathcal{L}_k(x) \right]_0^1 - \int_0^1 \chi(x) \mathcal{L}'_k(x) dx \right).
	\end{array}
\end{equation}
In order to derive the expression of $\dot\Chi_k $, we use the following properties of the Legendre polynomials. On the one hand, the values of Legendre polynomials at the boundaries of $[0\ 1]$  are given by $\mathcal{L}_k(0) = (-1)^k$ and $\mathcal{L}_k(1) = 1$. On the other hand, the Legendre polynomials verifies the following differentiation rule for $k > 0$:
\begin{equation*}
\frac{d}{dx} \mathcal{L}_k(x) = \sum_{j=0}^{k-1}  (2j\!+\!1)(1\! -\! (-1)^{j+k}) \mathcal{L}_j(x).
\end{equation*}
Hence, injecting these expressions into \eqref{diff:Chi_k} leads to:
\begin{equation*}
	\dot \Chi_k  = c \left( \chi(1,t) - (-1)^k \chi(0) \right) - c \textstyle \sum_{j=0}^{N} \ell_{k,j}\Chi_j,
\end{equation*}
where the coefficient $\ell_{k,j}$ are defined in equation \eqref{def_LN}. The end of the proof consists in gathering the previous expression from $k=1$ to $k=N$, leading to the definition of matrices $L_N$, $\mathbb {1}_N$ and $\bar{\mathbb {1}}_N$ given in \eqref{def_LN}. 

\vspace{-0.1cm}

\bibliographystyle{plain}
\bibliography{report_draft}

\begin{thebibliography}{10}

\bibitem{barreauInputOutput}
M.~Barreau, F.~Gouaisbaut, A.~Seuret, and R.~Sipahi.
\newblock Input / output stability of a damped string equation coupled with
  ordinary differential system.
\newblock Working paper, 2018.

\bibitem{baudouinHeat}
L.~Baudouin, A.~Seuret, F.~Gouaisbaut, and M.~Dattas.
\newblock Lyapunov stability analysis of a linear system coupled to a heat
  equation.
\newblock {\em IFAC-PapersOnLine}, 50(1):11978 -- 11983, 2017.
\newblock 20th IFAC World Congress, Toulouse.

\bibitem{baudouin:hal-01310306}
L.~Baudouin, A.~Seuret, and M.~Safi.
\newblock Stability analysis of a system coupled to a transport equation using
  integral inequalities.
\newblock 2016.
\newblock 2nd IFAC Workshop on CPDE in Italy.

\bibitem{bresch2014output}
D.~Bresch-Pietri and M.~Krstic.
\newblock Output-feedback adaptive control of a wave {PDE} with boundary
  anti-damping.
\newblock {\em Automatica}, 50(5):1407--1415, 2014.

\bibitem{castillobuenaventura:hal-00718725}
F.~Castillo, E.~Witrant, C.~Prieur, and L.~Dugard.
\newblock {Dynamic Boundary Stabilization of Linear and Quasi-Linear Hyperbolic
  Systems}.
\newblock In {\em 51st Annual Conference on Decision and Control (CDC)}, pages
  2952--2957. IEEE, 2012.

\bibitem{castillo2016dynamic}
F.~Castillo, E.~Witrant, C.~Prieur, and L.~Dugard.
\newblock Dynamic boundary stabilization of first order hyperbolic systems.
\newblock In {\em Recent Results on Time-Delay Systems}, pages 169--190.
  Springer Int. Publishing, 2016.

\bibitem{cerpa:hal-01670643}
E.~Cerpa and C.~Prieur.
\newblock {Effect of time scales on stability of coupled systems involving the
  wave equation}.
\newblock In {\em {IEEE Conf. on Dec. and Cont. (CDC'17)}}, Melbourne,
  Australia, 2017.

\bibitem{chen1990wave}
G.~Chen and J.~Zhou.
\newblock The wave propagation method for the analysis of boundary
  stabilization in vibrating structures.
\newblock {\em SIAM Journal on Applied Mathematics}, 50(5):1254--1283, 1990.

\bibitem{coron2007control}
J.~M. Coron.
\newblock {\em Control and nonlinearity}.
\newblock Number 136 in Mathematical Surveys and Monographs. American
  Mathematical Soc., 2007.

\bibitem{4060979}
J.M. Coron, B.~d'Andrea Novel, and G.~Bastin.
\newblock A strict {L}yapunov function for boundary control of hyperbolic
  systems of conservation laws.
\newblock {\em IEEE Trans. on Automatic Control}, 52(1):2--11, Jan 2007.

\bibitem{courant1966courant}
R.~Courant and D.~Hilbert.
\newblock {\em Methods of mathematical physics}.
\newblock John Wiley \& Sons, Inc., 1989.

\bibitem{andrea1994}
B.~d'Andr{\'e}a Novel, F.~Boustany, F.~Conrad, and B.~P. Rao.
\newblock Feedback stabilization of a hybrid {PDE-ODE} system: Application to
  an overhead crane.
\newblock {\em Mathematics of Control, Signals and Systems}, 7(1):1--22, 1994.

\bibitem{espitia2016event}
N.~Espitia, A.~Girard, N.~Marchand, and C.~Prieur.
\newblock Event-based control of linear hyperbolic systems of conservation
  laws.
\newblock {\em Automatica}, 70:275--287, 2016.

\bibitem{helmicki1991ill}
A.~Helmicki, C.~A. Jacobson, and C.~N. Nett.
\newblock Ill-posed distributed parameter systems: A control viewpoint.
\newblock {\em IEEE Trans. on Automatic Control}, 36(9):1053--1057, 1991.

\bibitem{krstic2009delay}
M.~Krstic.
\newblock {\em Delay compensation for nonlinear, adaptive, and {PDE} systems}.
\newblock Springer, 2009.

\bibitem{krstic2011}
M.~Krstic.
\newblock Dead-time compensation for wave/string {PDE}s.
\newblock {\em Journal of Dynamic Systems, Measurement, and Control}, 133(3),
  2011.

\bibitem{Lagnese1983163}
J.~Lagnese.
\newblock Decay of solutions of wave equations in a bounded region with
  boundary dissipation.
\newblock {\em Journal of Differential equations}, 50(2):163--182, 1983.

\bibitem{lions1988exact}
J.-L. Lions.
\newblock Exact controllability, stabilization and perturbations for
  distributed systems.
\newblock {\em SIAM review}, 30(1):1--68, 1988.

\bibitem{1393890}
J.~L{\"o}fberg.
\newblock Yalmip: A toolbox for modeling and optimization in matlab.
\newblock pages 284--289, 2005.

\bibitem{luo2012stability}
Z.-H. Luo, B.-Z. Guo, and {\"O}.~Morg{\"u}l.
\newblock {\em Stability and stabilization of infinite dimensional systems with
  applications}.
\newblock Springer Science \& Business Media, 2012.

\bibitem{morgul1994dynamic}
{\"O}.~Morg{\"u}l.
\newblock A dynamic control law for the wave equation.
\newblock {\em Automatica}, 30(11):1785--1792, 1994.

\bibitem{morgul1995stabilization}
{\"O}.~Morg{\"u}l.
\newblock On the stabilization and stability robustness against small delays of
  some damped wave equations.
\newblock {\em IEEE Trans. on Automatic Control}, 40(9):1626--1630, 1995.

\bibitem{Morgül2002731}
{\"O}.~Morg{\"u}l.
\newblock An exponential stability result for the wave equation.
\newblock {\em Automatica}, 38(4):731--735, 2002.

\bibitem{prieur2016wave}
C.~Prieur, S.~Tarbouriech, and J.~M.~G. da~Silva.
\newblock Wave equation with cone-bounded control laws.
\newblock {\em IEEE Trans. on Automatic Control}, 61(11):3452--3463, 2016.

\bibitem{safi:hal-01354073}
M.~Safi, L.~Baudouin, and A.~Seuret.
\newblock Refined exponential stability analysis of a coupled system.
\newblock {\em IFAC-PapersOnLine}, 50(1):11972 -- 11977, 2017.
\newblock 20th IFAC World Congress.

\bibitem{seuret:hal-01065142}
A.~Seuret and F.~Gouaisbaut.
\newblock Hierarchy of {LMI} conditions for the stability analysis of time
  delay systems.
\newblock {\em Systems \& Control Letters}, 81:1--7, 2015.

\bibitem{tang2011state}
S.~Tang and C.~Xie.
\newblock State and output feedback boundary control for a coupled {PDE}--{ODE}
  system.
\newblock {\em Systems \& Control Letters}, 60(8):540--545, 2011.

\bibitem{tucsnak2009observation}
M.~Tucsnak and G.~Weiss.
\newblock {\em Observation and control for operator semigroups}.
\newblock Springer Science \& Business Media, 2009.

\bibitem{Wu20142787}
H.-N. Wu and J.-W. Wang.
\newblock Static output feedback control via {PDE} boundary and {ODE}
  measurements in linear cascaded {ODE}-beam systems.
\newblock {\em Automatica}, 50(11):2787--2798, 2014.

\end{thebibliography}

\end{document}